\newtheorem{theorem}{Theorem}[section]
\newtheorem{corollary}[theorem]{Corollary}
\newtheorem{definition}[theorem]{Definition}
\theoremstyle{remark}
\newtheorem{remark}[theorem]{Remark}
\newtheorem{example}[theorem]{Example}
\def\loc{\text{\rm loc}}
\begin{document}

\centerline{\bf NORM RELATED INEQUALITIES FOR FRACTIONAL INTEGRALS}

\smallskip\begin{center}
{\bf Elena P. Ushakova$~^{1,2,3}$}\footnote{Corresponding author. E--mail address: elenau@inbox.ru.\\ The research work of the first author related to Section 5 
was performed at Steklov Mathematical Institute of Russian Academy of Sciences under financial support of the Russian Science Foundation (project 19-11-00087). 
The rest part of the paper was carried out within the framework of the State Tasks of Ministry of Education and Science of Russian Federation for V.A. Trapeznikov 
Institute of Control Sciences of Russian Academy of Sciences, Computing Center of Far--Eastern Branch of Russian Academy of Sciences and ITMO University. The both authors were also 
partially supported by the Russian Foundation for Basic Research (project 19--01--00223).} {\bf and Kristina E. Ushakova$~^{4}$}\end{center}

\smallskip
\centerline{\textit {$~^1$ V.A. Trapeznikov Institute of Control Sciences of RAS, Moscow, Russia}}
\centerline{\textit {$~^2$ Steklov Mathematical Institute of RAS, Moscow, Russia} }
\centerline{\textit {$~^3$ Computing Center of FEB RAS, Khabarovsk, Russia}} 
\centerline{\textit {$~^4$ ITMO University, Saint--Petersburg, Russia}}

\medskip

\noindent\textit{Key words}: Riemann--Liouville operator; Besov function space; Muckenhoupt weight; 
spline wavelet basis; atomic decomposition; molecular decomposition; Battle--Lemari\'{e} wavelet system.
\\ \textit{MSC (2010)}: Primary 47G10; Secondary 46E35.


{\small {\bf Abstract.} Fractional spline wavelet systems are considered in the work. Molecular structure of elements of such systems admits estimates 
connecting norms of fractional integrals' images and pre--images in Besov spaces.
}

\smallskip
\section{Introduction}
For $f\in L_1^\mathrm{loc}(\mathbb{R})$ we consider the left-- and the right--hand side Riemann--Liouville operators
\begin{equation}\label{left}
I_{a+}^{\boldsymbol{\alpha}} f({x}):=\frac{1}{\Gamma(\boldsymbol{\alpha})}\int_{a}^{x}{(x-y)^{\boldsymbol{\alpha}-1}}{f({y})}\,dy\qquad(x>a)
\end{equation} and
\begin{equation}\label{right}
I_{a-}^{\boldsymbol{\alpha}} f({x}):=\frac{1}{\Gamma(\boldsymbol{\alpha})}\int_{x}^{a}{(y-x)^{\boldsymbol{\alpha}-1}}{f({y})}\,dy\qquad(x<a)
\end{equation} of positive fractional order $\boldsymbol{\alpha}$ \cite{SKM} in Besov spaces $B_{pq}^{s}(\mathbb{R},w)$ with 
$1\le p<\infty$, $0<q\le\infty$, $s\in\mathbb{R}$ and Muckenhoupt weights $w$ 
(see \S\,\ref{BS} for definitions).

The main purpose of this paper is the study of relations between norms of images and pre--images of operators \eqref{left} and \eqref{right} in 
$B_{pq}^{s}(\mathbb{R},w)$. For simplicity, we assume $f(x)\equiv 0$ for $x\in(-\infty,a)$ or $x\in(a,+\infty)$. 

Connections between images and pre--images of operators in $B_{pq}^s(\mathbb{R},w)$ 
have been studied in \cite[Theorem 2.3.8]{Tr1}, \cite{N}, \cite[Theorem 2.20]{R}, \cite[\S\,4]{IS}, \cite[p.\,23]{Tr6}. Norm related inequalities 
in $B_{pq}^{s}(\mathbb{R},w)$ for integrals \eqref{left} and \eqref{right} of natural orders $\boldsymbol{\alpha}$ were considered in \cite{UshIm}. 
This work continues the study 
of the same problem by extending the results obtained in \cite[Theorems 5.1, 5.2]{UshIm} to fractional $\boldsymbol{\alpha}>0$.

Our instruments are decompositions from \cite[Theorem 11.4]{Rou} and \cite[\S\,4.3]{PSI} (see also \cite[Theorem 4.7]{RMC})
of molecular and atomic types, respectively (see \S\,\ref{DEC}). We use them by applying Riesz bases generated by spline wavelet systems of fractional and natural orders.   
In \S\,\ref{SP} explicit formulae are given for elements of such systems since we need them for establishing our main results in \S\,\ref{MR}. 
More precisely, in Theorem \ref{ImagesA}, we obtain conditions for the validity of embedding inequalities for (sub)spaces of images and pre--images of 
the operators $I_{a^\pm}^{\boldsymbol{\alpha}}$ with fractional
$\boldsymbol{\alpha}>0$ in $B_{pq}^{s}(\mathbb{R},w)$. In particular, for 
$I_{0^\pm}^{\boldsymbol{\alpha}}$ one of our results reads 
\begin{theorem}\label{T11} Let $p>1$, $q>0$ and $s\in\mathbb{R}$. Suppose $u,w$ are Muckenhoupt weights and $f\in L_1^\textrm{loc}(\mathbb{R})$. 
For fractional $\boldsymbol{\alpha}>0$ let $I_{0^\pm}^{\boldsymbol{\alpha}}$ be defined by \eqref{left} and \eqref{right}. 
Suppose $f\equiv 0$ on $\mathbb{R}_\mp$ for $I_{0^\pm}^{\boldsymbol{\alpha}}f$, where $\mathbb{R}_-:=(-\infty,0)$ and $\mathbb{R}_+:=(0,\infty)$. {\rm (i)} 
Assume that the following equivalences hold for the both 
$v=u$ and $v=w$: \begin{equation}\label{usl}\int_{2^{-d}(r-1/2)}^{2^{-d}(r+1/2)}v\approx 2^{-d}v(2^{-d}r)\qquad \textrm{for all } d\in\mathbb{N}_0
:=\{0\}\cup\mathbb{N}\ \textrm{and } r\in\mathbb{Z}.\end{equation} Then
$I_{0^\pm}^{\boldsymbol{\alpha}}f\in {B}_{pq}^{s}(\mathbb{R},w)$ if $f\in {B}_{pq}^{s+\boldsymbol{\alpha}}(\mathbb{R},u)$ provided 
$\mathscr{N}_{0^\pm}^{\boldsymbol{\alpha}}<\infty$, where
\begin{align*}\mathscr{N}_{0^+}^{\boldsymbol{\alpha}}=&\sup_{\tau\in \mathbb{Z}^+}
\biggl(\sum_{r\ge\tau}(r-\tau+1)^{p(2\boldsymbol{\alpha}-1)}w(r)\biggr)^{{1}/{p}}\biggl(\sum_{0\le r\le\tau} 
{u}(r)^{1-p'}\biggr)^{{1}/{p'}}\\&+\sup_{\tau\in \mathbb{Z}^+}
\biggl(\sum_{r\ge\tau}w(r)\biggr)^{{1}/{p}}\biggl(\sum_{0\le r\le\tau} (\tau-r+1)^{p'(2\boldsymbol{\alpha}-1)}
{u}(r)^{1-p'}\biggr)^{{1}/{p'}},\\
\mathscr{N}_{0^-}^{\boldsymbol{\alpha}}=&\sup_{\tau\in \mathbb{Z}^-}
\biggl(\sum_{0\ge r\ge\tau}(r-\tau+1)^{p(2\boldsymbol{\alpha}-1)}w(r)\biggr)^{{1}/{p}}\biggl(\sum_{r\le\tau} 
{u}(r)^{1-p'}\biggr)^{{1}/{p'}}\\&+\sup_{\tau\in \mathbb{Z}^-}
\biggl(\sum_{0\ge r\ge\tau}w(r)\biggr)^{{1}/{p}}\biggl(\sum_{r\le\tau} (\tau-r+1)^{p'(2\boldsymbol{\alpha}-1)}
{u}(r)^{1-p'}\biggr)^{{1}/{p'}}
\end{align*} and $\mathbb{Z}^+:=\mathbb{Z}\cap[0,\infty)$, $\mathbb{Z}^-:=\mathbb{Z}\cap(-\infty,0]$.
Moreover, \begin{equation*}\|I_{0^\pm}^{\boldsymbol{\alpha}}f\|_{{B}_{pq}^{s}(\mathbb{R},w)}
\lesssim \mathscr{N}_{0^\pm}^{\boldsymbol{\alpha}}\|f\|_{{B}_{pq}^{s+\boldsymbol{\alpha}}(\mathbb{R},u)}.\end{equation*}
{\rm (ii)} If $I_{0^\pm}^{\boldsymbol{\alpha}}f\in {B}_{pq}^{s}(\mathbb{R},w)$ then 
$f\in {B}_{pq}^{s-{\boldsymbol{\alpha}}}(\mathbb{R},w)$, provided $r_w<\boldsymbol{\alpha}$ {\rm(}see \eqref{param}{\rm)}, besides,
$$\|f\|_{{B}_{pq}^{s-\boldsymbol{\alpha}}(\mathbb{R},w)}\lesssim\|I_{0^\pm}^{\boldsymbol{\alpha}}f\|_{{B}_{pq}^{s}(\mathbb{R},w)}.$$ 
For $\boldsymbol{\alpha}\in(0,1)$ the assertion {\rm(ii)} is unconditionally true in the case $w\equiv 1$.
\end{theorem}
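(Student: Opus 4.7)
The plan is to attack both parts through a spline-wavelet Riesz basis decomposition. Writing $f=\sum_{j,k}\lambda_{j,k}\psi_{j,k}$ in a fractional Battle--Lemari\'e-type basis of the kind constructed in \S\,\ref{SP}, condition \eqref{usl} is precisely what makes the norms $\|f\|_{B_{pq}^{s+\boldsymbol{\alpha}}(\mathbb{R},u)}$ and $\|I_{0^\pm}^{\boldsymbol{\alpha}}f\|_{B_{pq}^{s}(\mathbb{R},w)}$ equivalent to weighted $\ell_q(\ell_p)$-type sequence norms of the coefficients, via the atomic and molecular characterizations from \cite[Theorem 11.4]{Rou} and \cite[\S\,4.3]{PSI} recalled in \S\,\ref{DEC}. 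This reduces part (i) to a purely discrete weighted inequality between two sequence norms indexed by dyadic cubes.

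For part (i) I would next apply $I_{0^\pm}^{\boldsymbol{\alpha}}$ term-by-term and use the explicit formulae of \S\,\ref{SP}: since fractional integration raises the spline order by $\boldsymbol{\alpha}$, each $I_{0^\pm}^{\boldsymbol{\alpha}}\psi_{j,k}$ is a \emph{molecule} attached to the dyadic cube of scale $2^{-j}$ centred near $2^{-j}k$, with polynomial tail decay of order $(1+|\cdot|)^{2\boldsymbol{\alpha}-1}$ after the proper rescaling. The molecular decomposition then bounds $\|I_{0^\pm}^{\boldsymbol{\alpha}}f\|_{B_{pq}^{s}(\mathbb{R},w)}$ by a weighted sum of $|\lambda_{j,k}|$ whose comparison with the $(u,s+\boldsymbol{\alpha})$-sequence norm of $f$ is a two-term discrete Hardy-type inequality. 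A standard duality argument identifies its boundedness constant as exactly $\mathscr{N}_{0^\pm}^{\boldsymbol{\alpha}}$: the first supremum governs the off-diagonal contribution produced by the polynomial tail factor $(r-\tau+1)^{2\boldsymbol{\alpha}-1}$, while the second supremum handles the symmetric counterpart arising on the opposite side of the diagonal.

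For part (ii) I would invert $I_{0^\pm}^{\boldsymbol{\alpha}}$ via the Marchaud fractional derivative $D_{0^\pm}^{\boldsymbol{\alpha}}$ and transfer the same wavelet machinery in reverse: if $g=I_{0^\pm}^{\boldsymbol{\alpha}}f\in B_{pq}^{s}(\mathbb{R},w)$, expanding $g$ in splines of order $m+\boldsymbol{\alpha}$ gives $f=D_{0^\pm}^{\boldsymbol{\alpha}}g$ with the same coefficient sequence but basis elements of order $m$. A weighted Besov lifting estimate, in which the Muckenhoupt index $r_w$ quantifies the admissible loss of smoothness, then yields the shift $s\to s-\boldsymbol{\alpha}$; the hypothesis $r_w<\boldsymbol{\alpha}$ is the sharp threshold ensuring that this lifting passes through the weight without extra assumptions. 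For $w\equiv 1$ and $\boldsymbol{\alpha}\in(0,1)$ the classical unweighted lifting theorem yields the conclusion with no further hypothesis, recovering the last assertion.

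The principal obstacle is the molecular analysis for fractional $\boldsymbol{\alpha}$ in part (i). Unlike the natural-order setting of \cite{UshIm}, where $I_{0^\pm}^{\boldsymbol{\alpha}}\psi_{j,k}$ is itself a compactly supported spline, here the image has only power-law decay; verifying the localisation, regularity, and (vanishing) moment conditions required by \cite[Theorem 11.4]{Rou} demands careful book-keeping of the fractional-order constants. Matching the resulting tail exponent with the factor $(r-\tau+1)^{p(2\boldsymbol{\alpha}-1)}$ appearing in $\mathscr{N}_{0^\pm}^{\boldsymbol{\alpha}}$, and keeping the molecular norms summable against the weighted coefficient sequence after the dyadic rescalings, is the technical crux of the argument.
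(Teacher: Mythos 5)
Your overall frame (wavelet decomposition plus a discrete two--weight inequality) points in the right direction, but there are two concrete gaps. First, you assign condition \eqref{usl} the wrong job: the sequence--space characterisations of $B_{pq}^{s}(\mathbb{R},w)$ hold for every $\mathscr{A}_\infty$ weight and do not use \eqref{usl} at all. In the paper this theorem is the case $a=0$ of Theorem \ref{ImagesA}, whose hypothesis is $\sup_{d\in\mathbb{N}_0}\mathscr{N}_{0^\pm}^{\boldsymbol{\alpha}}(d)<\infty$ with cubes of all dyadic scales $2^{-d}$; condition \eqref{usl} serves only to collapse this supremum to the single term $d=0$, which is exactly the quantity $\mathscr{N}_{0^\pm}^{\boldsymbol{\alpha}}$ in the statement. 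Second, and more seriously, your part (i) does not explain how the two--weight constant enters. If the images $I_{0^\pm}^{\boldsymbol{\alpha}}\psi_{j,k}$ really were uniformly normalised molecules for $B_{pq}^{s}(\mathbb{R},w)$, then \cite[Theorem 11.4]{Rou} would give the estimate with no condition on the pair $(u,w)$ beyond the smoothness shift --- contradicting the presence of $\mathscr{N}_{0^\pm}^{\boldsymbol{\alpha}}$ in the conclusion. They are not: the one--sided fractional integral destroys compact support, the vanishing moments and the translation covariance near the origin. You flag this as the ``technical crux'' but leave it unresolved, and the exponent $2\boldsymbol{\alpha}-1$ is asserted rather than derived.

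The paper's actual mechanism is the transposed one. It expands $I_{0^\pm}^{\boldsymbol{\alpha}}f$ in a \emph{fractional} spline wavelet system of order $\alpha_0$ adapted to the target space (Theorem \ref{FrDec}), moves $I_{0^\pm}^{\boldsymbol{\alpha}}$ onto the analysing spline by Fubini and a Beta--function identity --- raising its order to $\alpha^\ast=\alpha_0+\boldsymbol{\alpha}\in\mathbb{N}$ --- and then uses the Chu--Vandermonde identity \eqref{ChVan} to rewrite the resulting fractional binomial series as a discrete convolution, with kernel $\binom{-2\boldsymbol{\alpha}}{k}\approx(k+1)^{2\boldsymbol{\alpha}-1}$, of compactly supported integer--order B--spline wavelets that decompose $f$ in $B_{pq}^{s+\boldsymbol{\alpha}}(\mathbb{R},u)$ via Theorem \ref{main'}. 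The constant $\mathscr{N}_{0^\pm}^{\boldsymbol{\alpha}}$ is then precisely the two--weight boundedness constant of this discrete kernel operator supplied by \cite[Theorem 1.8]{PSU}; it is not produced by ``a standard duality argument'' on molecular tails. Likewise, in part (ii) the paper does not invoke a Marchaud derivative or an abstract weighted lifting theorem: it uses the composition identity \eqref{REPR} to lower the order of the analysing spline by $\boldsymbol{\alpha}$, and the hypothesis $r_w<\boldsymbol{\alpha}$ appears concretely as the summability condition $\sum_{k}\bigl|\binom{\boldsymbol{\alpha}}{k}\bigr|k^{r_w}<\infty$ needed to shift the weight between neighbouring cubes along the discrete convolution. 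Your sketch would need all of this supplied before it could be called a proof.
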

 
Throughout the paper relations of the type $A\lesssim B$ mean that $A\le cB$ with
some constant $0<c<\infty$ depending, possibly, on number parameters. We write
$A\approx B$ instead of $A\lesssim B \lesssim A$ and $A\simeq B$ instead of $A=cB$. We stand $\mathbb Z$, $\mathbb N$ and $\mathbb R$ for integer, natural and 
real numbers, respectively. 
By $\mathbb{N}_0$ we denote the set $\mathbb{N}\cup\{0\}$. 
The symbol 
$\Gamma(\cdot)$ stands for the Gamma function, 
 $[s]$ --- for the greatest integer less than or equal to $s\in\mathbb{R}$. 
We put $r':=
{r}/({r-1})$ if $0<r<\infty$ and $r'=1$ for $r=\infty$. 
We say that $f\in L_1^\mathrm{loc}(\mathbb{R})$ if $f\in L_1(\Omega)$ for every compact subset $\Omega$ of $\mathbb{R}$. Marks $:=$ and
$=:$ are used for introducing new quantities. We abbreviate $h(\Omega):=\int_\Omega h(x)\, dx$,
where {$\Omega \subset\mathbb{R}$} is some bounded, measurable set.

\section{Besov spaces with Muckenhoupt weights}\label{BS}
Let a function $w$ be locally integrable and almost everywhere positive on $\mathbb{R}$ (a weight). By $L^r(\mathbb{R})$, $0<r\le\infty$, we denote the Lebesgue space of all measurable functions $f$ on $\mathbb{R}$ quasi--normed by $$\|f\|_{L^r(\mathbb{R})}:=\biggl(\int_{\mathbb{R}}|f(x)|^r\,dx\biggr)^{{1}/{r}}$$ with the usual modification if $r=\infty$.

\begin{definition}{\rm (\cite[Chapter~V]{S}) A weight $w$ belongs to the \textit{Muckenhoupt class} $\mathscr{A}_p$, $1<p<\infty$, if \begin{equation*}
\sup\frac{w(B)}{|B|}
\biggl(\frac{1}{|B|}\int_B w^{1-p'}\biggr)^{\frac{p}{p'}}<\infty,\end{equation*} where supremum is taken over all balls $B\subset\mathbb{R}$;\ $w\in\mathscr{A}_1$ if supremum over all balls $B\subset\mathbb{R}$ of the form
 \begin{equation*}
\sup \frac{w(B)}{|B|}\,\|1/w\|_{L^\infty(B)}<\infty\end{equation*} is finite;
\textit{Muckenhoupt class} $\mathscr{A}_\infty$ is given by $\mathscr{A}_\infty=\bigcup_{p\ge 1}\mathscr{A}_p.$
}\end{definition}

We refer to \cite[Chapter~V]{S} and e.g. \cite[Lemma 1.3]{HSc}, \cite[Lemma 2.3]{HP} for properties of the $\mathscr{A}_\infty$ class. One of them is the doubling property: there exists a constant $c>0$ such that for any $\delta>0$ and $z\in\mathbb{R}$
\begin{equation}\label{doubl}w\bigl(B_{2\delta}(z)\bigr)\le cw\bigl(B_{\delta}(z)\bigr)\end{equation} holds for all arbitrary balls $B_{\delta}(z):=\{x\in\mathbb{R}\colon |z-x|<\delta\}$ and $B_{2\delta}(z)$. If $2^r$ is the smallest constant $c$ for which \eqref{doubl} holds, then $r$ is called the doubling exponent of $w$. 
The same meaning has another characteristic parameter of the Muckenhoupt class, that is the number 
\begin{equation}\label{param}r_w:=\inf\{r\ge 1\colon w\in\mathscr{A}_r\}<\infty.\end{equation} In our proofs we shall use the following property of 
$w\in\mathscr{A}_p$, $p\ge 1$, which holds with some $c=c(w)>0$:
\begin{equation}\label{Muck}
\frac{|E|}{|B|}\le c \Bigl(\frac{w(E)}{w(B)}\Bigr)^{1/p} \qquad (E\subset B).\end{equation}  

Examples of weights $w$ belonging to $\mathscr{A}_p$, $1\le p<\infty$, can be {seen} in e.g. \cite[Examples 1.5]{HSc} and 
\cite[Remark 2.4, Example 2.7]{HP}. Alternative definitions, further properties and examples of Muckenhoupt weights can be found in 
\cite{S} and \cite[Lemma 1.4]{HSc} (see also \cite{HP,M,M1,M2,Tr1}).

Let $1\le p<\infty$, $0<q\le\infty$ and $s\in\mathbb{R}$. To define Besov spaces $B_{pq}^{s}(\mathbb{R},w)$ we introduce the Schwartz space 
$\mathscr{S}(\mathbb{R})$ of all complex--valued rapidly decreasing, infinitely differentiable functions on $\mathbb{R}$. By $\mathscr{S}'(\mathbb{R})$ we denote its topological dual, the space of tempered distributions on $\mathbb{R}$. Defining the Fourier transform 
\begin{equation*} 
\widehat{\varphi}(\xi)=(2\pi)^{-1/2}\int_{\mathbb{R}}\mathrm{e}^{-i\xi x}\varphi(x)\,dx \qquad (\xi\in\mathbb{R})
\end{equation*}
for $\varphi\in\mathscr{S}(\mathbb{R})$, we fix such a $\varphi$ having $\textrm{supp}\,\hat{\varphi}\subseteq\bigl\{\xi\in\mathbb{R}\colon 1/2\le|\xi|\le 2\bigr\}$ and satisfying $|\hat{\varphi}(\xi)|\ge c>0$ for $3/5\le|\xi|\le 5/3$, and set $\varphi_\nu(x):=2^{\nu-1}\varphi(2^{\nu-1} x)$ for $\nu\in\mathbb{N}$. In addition, we choose ${\varphi_0}\in\mathscr{S}(\mathbb{R})$ with $\textrm{supp}\,\hat{{\varphi_0}}\subseteq\bigl\{\xi\in\mathbb{R}\colon |\xi|\le 2\bigr\}$ satisfying $|\hat{{\varphi_0}}(\xi)|\le c>0$ for $|\xi|\le 5/3$.
The Besov spaces $B_{pq}^{s}(\mathbb{R},w)$ with Muckenhoupt weight $w$ \cite[Definition 11.1]{Rou} is the collection of all distributions $f \in \mathscr{S}'(\mathbb{R})$ such that
\begin{equation}\label{BspqS}
  \|f\|_{B_{pq}^{s}(\mathbb{R},w)}: =\bigl\|{\varphi_0}\ast f\bigr\|_{L^p(\mathbb{R},w)}+\biggl\|\sum_{\nu\in\mathbb{N}}2^{q\nu s}\bigl\|\varphi_\nu\ast f\bigr\|_{L^p(\mathbb{R},w)}^q\biggr\|^{1/q} <\infty.
\end{equation} Here, for $r\ge 1$ given and $w$ fixed, symbol $L^r(\mathbb{R},w)$ stands for the weighted Lebesgue space normed by $\|f\|_{L^r(\mathbb{R},w)}:=\|w^{1/r}f\|_{L^r(\mathbb{R})}$ with the usual modification for $r=\infty$. 
The \eqref{BspqS} admits the usual modification if $q=\infty$. Definition of $B_{pq}^{s}(\mathbb{R},w)$ is independent of the choice of $\varphi$ and ${\varphi}_0$.

For $\tau\in\mathbb{Z}$ and $\nu\in\mathbb{N}_0$ we introduce dyadic segments $Q_{\nu\tau}:=\Bigl[\frac{\tau}{2^\nu},\frac{\tau+1}{2^\nu}\Bigr]$ with the lower left corner $x_{Q_{\nu\tau}}=2^{-\nu}\tau$. In what follows we shall need spaces ${b}_{pq}^{s}(w)$ consisting of all sequences ${\lambda}=\{{\lambda}_{\nu\tau}\}$ such that
\begin{equation*}
  \|{\lambda}\|_{{b}_{pq}^{s}(w)}: =\biggl\|\sum_{\nu\in\mathbb{N}_0}2^{q\nu s}\Bigl\|\sum_{\tau\in\mathbb{Z}}|{\lambda}_{\nu\tau}|\chi_{Q_{\nu\tau}}\Bigr\|_{L^p(\mathbb{R},w)}^q\biggr\|^{1/q} <\infty.
\end{equation*}

Our results are partially based on ${B}_{pq}^{s}(\mathbb{R},w)$ and ${b}_{pq}^{s}(w)$ norm related inequalities using molecular representation of elements 
from $B_{pq}^{s}(\mathbb{R},w)$ \cite{Rou}. 

Let $D^\gamma$, $\gamma\in\mathbb{N}_0$, stand for derivatives. 
Assume $0<\delta\le 1$, $M>0$ and $N\in\mathbb{Z}$. A function $m_Q$ is called a smooth $(\delta,M,N)-$ molecule for $Q=Q_{\nu\tau}$ dyadic with $\nu\in\mathbb{N}$ if
\begin{itemize}
\item[(M1)] $\int_\mathbb{R}x^\gamma m_Q(x)\,dx=0$~ for~ $0\le\gamma\le N$,
\item[(M2)] $|m_Q(x)|\le 2^{\nu/2}\Bigl(1+{2^{\nu}}{|x-x_Q|}\Bigr)^{-\max\{M,M-s\}}$,
\item[(M3)] $|D^\gamma m_Q(x)|\le 2^{\nu/2+\nu\gamma}\Bigl(1+{2^{\nu}}{|x-x_Q|}\Bigr)^{-M}$~ if~ $0\le\gamma\le [s]$,
\item[(M4)] $|D^\gamma m_Q(x)-D^\gamma m_Q(y)|\le 2^{\nu/2+\nu\gamma
+\nu\delta}|x-y|^{\delta}\sup_{|z|\le|x-y|}
\Bigl(1+{2^{\nu}}{|x-z-x_Q|}\Bigr)^{-M}$~ if~ $0\le\gamma=[s]$.
\end{itemize} 
It is understood that (M1) is void if $N<0$, and (M3)--(M4) are void if $s<0$.

For $Q=Q_{0\tau}$, that is $Q_{\nu\tau}$ with $\nu=0$, a function $m_Q$ is a smooth $(\delta,M,N)-$ molecule if it satisfies
\begin{itemize}
\item[(M2*)] $|m_Q(x)|\le \bigl(1+{|x-x_Q|}\bigr)^{-M}$,
\item[(M3*)] $|D^\gamma m_Q(x)|\le \Bigl(1+{|x-x_Q|}\Bigr)^{-M}$~ if~ $0<\gamma\le [s]$,
\item[(M4*)] $|D^\gamma m_Q(x)-D^\gamma m_Q(y)|\le |x-y|^{\delta}\sup_{|z|\le|x-y|}
\Bigl(1+{|x-z-x_Q|}\Bigr)^{-M}$~ if~ $0\le\gamma=[s]$.
\end{itemize}
As before, (M3*)--(M4*) are void if $s<0$.

We say $\{m_Q\}=\{m_{Q_{\nu\tau}}\}$, with $\nu\in\mathbb{N}_0$ and $\tau\in\mathbb{Z}$, is a family of smooth molecules for ${B}_{pq}^{s}(\mathbb{R},w)$ if each $m_Q$ is a $(\delta,M,N)-$ molecule and for $\nu\in\mathbb{N}$ 
\begin{itemize} \item[(M.i)] $s-[s]<\delta\le 1$,
\item[(M.ii)] $M>J$, where $J=r_w/p+1/p'$ if $p>1$ and $J=r_w$ if $p=1$,
\item[(M.iii)] $N=\max\bigl\{[J-s-1],-1\bigr\}$. \end{itemize} 

Connection between ${B}_{pq}^{s}(\mathbb{R},w)$ and ${b}_{pq}^{s}(w)$ norms in one direction is regulated by the following 
\begin{theorem}\label{T0} \cite[Theorem 11.4]{Rou}
Let $s\in\mathbb{R}$, $1\le p<\infty$, $0<q\le\infty$ and $w\in\mathscr{A}_\infty$. Suppose $\{m_Q\}$ is a family of smooth molecules for ${B}_{pq}^{s}(\mathbb{R},w)$ 
and $f\in\mathscr{S}'(\mathbb{R})$. Then the distribution $$\sum_{\nu\in\mathbb{N}_0}\sum_{\tau\in\mathbb{Z}}\langle f,m_{Q_{\nu\tau}}\rangle m_{Q_{\nu\tau}}(x)$$ belongs to ${B}_{pq}^{s}(\mathbb{R},w)$ if $\|{\lambda}\|_{{b}_{pq}^{s}(w)}<\infty$ with
$\{\lambda_{\nu\tau}\}=\{2^{\nu/2}\langle f,m_{Q_{\nu\tau}}\rangle\}$. Moreover, 
$$\|f\|_{B_{pq}^{s}(\mathbb{R},w)}\lesssim \|{\lambda}\|_{{b}_{pq}^{s}(w)}.$$
\end{theorem}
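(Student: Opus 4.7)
The plan is to expand $f$ in a fractional spline wavelet Riesz basis constructed in \S\,\ref{SP}, apply $I_{0^\pm}^{\boldsymbol{\alpha}}$ termwise, and show that the resulting functions (after rescaling) form a family of smooth molecules for the target weighted Besov space; Theorem \ref{T0} then converts the $B_{pq}^{s}(\mathbb{R},w)$-norm estimate into a discrete inequality on $b_{pq}^{s}(w)$, which is in turn controlled by the $b_{pq}^{s+\boldsymbol{\alpha}}(u)$-norm of the original coefficients by means of a two-weight discrete Hardy inequality whose constants are precisely $\mathscr{N}_{0^\pm}^{\boldsymbol{\alpha}}$.

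For part (i), I would first use the atomic decomposition counterpart of Theorem \ref{T0} from \cite[\S\,4.3]{PSI} to write $f=\sum_{\nu,\tau} c_{\nu\tau}\psi_{\nu\tau}$ with $\|c\|_{b_{pq}^{s+\boldsymbol{\alpha}}(u)}\lesssim\|f\|_{B_{pq}^{s+\boldsymbol{\alpha}}(\mathbb{R},u)}$, where $\psi_{\nu\tau}$ are the fractional spline wavelets from \S\,\ref{SP}. The key observation, made explicit by the formulas of \S\,\ref{SP}, is that $I_{0^+}^{\boldsymbol{\alpha}}\psi_{\nu\tau}$ is, up to the natural factor $2^{-\nu\boldsymbol{\alpha}}$, a fractional spline of order increased by $\boldsymbol{\alpha}$; hence $\{2^{\nu\boldsymbol{\alpha}}I_{0^+}^{\boldsymbol{\alpha}}\psi_{\nu\tau}\}$ is the candidate family of smooth molecules for $B_{pq}^{s}(\mathbb{R},w)$. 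Checking conditions (M1)--(M4) together with (M.i)--(M.iii) for the shifted wavelets is the main technical obstacle: one has to control the polynomial tail introduced by integration against $(x-y)^{\boldsymbol{\alpha}-1}$ and count vanishing moments under the shift. Here the discretisation hypothesis \eqref{usl} is essential, since it lets the continuous tail of a molecule be summed as a discrete Muckenhoupt weight on unit cells, thus converting continuous $L^p(\mathbb{R},w)$ quantities into sums over $r\in\mathbb{Z}$.

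Having identified the rescaled images as molecules, Theorem \ref{T0} yields $\|I_{0^+}^{\boldsymbol{\alpha}}f\|_{B_{pq}^{s}(\mathbb{R},w)}\lesssim \|\widetilde c\|_{b_{pq}^{s}(w)}$, where $\widetilde c_{\nu\tau}$ is essentially $2^{-\nu\boldsymbol{\alpha}}$ times a discrete convolution of $c_{\nu,\cdot}$ with a kernel of size $(|\tau-r|+1)^{2\boldsymbol{\alpha}-1}$ arising from evaluating $I_{0^+}^{\boldsymbol{\alpha}}\psi_{\nu r}$ on the cell $Q_{\nu\tau}$. This reduces the problem, on each fixed dyadic level $\nu$, to a one-dimensional two-weight discrete Hardy inequality with that kernel and weights $w,u$; the estimate $\|\widetilde c\|_{b_{pq}^{s}(w)}\lesssim \mathscr{N}_{0^+}^{\boldsymbol{\alpha}}\|c\|_{b_{pq}^{s+\boldsymbol{\alpha}}(u)}$ then follows from a Muckenhoupt--Bradley-type criterion, and the splitting of $\mathscr{N}_{0^+}^{\boldsymbol{\alpha}}$ into two suprema corresponds exactly to the two regimes $|\tau-r|\lesssim 1$ and $|\tau-r|\gg 1$ in the kernel estimate. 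The minus case is obtained by the symmetric argument on $\mathbb{R}_-$.

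For part (ii), I would expand $g:=I_{0^\pm}^{\boldsymbol{\alpha}}f$ in the shifted-order wavelet basis and invert using $f=D_{0^\pm}^{\boldsymbol{\alpha}}g$, using the same spline identity from \S\,\ref{SP} to recognise $\{2^{-\nu\boldsymbol{\alpha}}\psi_{\nu\tau}\}$ as smooth molecules for $B_{pq}^{s-\boldsymbol{\alpha}}(\mathbb{R},w)$ with the \emph{same} weight $w$ on both sides, so no two-weight Hardy condition is needed. The assumption $r_w<\boldsymbol{\alpha}$ enters in verifying (M.iii): after the order shift the molecules inherit only $[s+\boldsymbol{\alpha}]$ vanishing moments and must satisfy $N=\max\{[J-(s-\boldsymbol{\alpha})-1],-1\}$ with $J=r_w/p+1/p'$, which pins the doubling exponent below $\boldsymbol{\alpha}$. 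When $w\equiv 1$ one has $r_w=1$, and for $\boldsymbol{\alpha}\in(0,1)$ the classical unweighted lift property \cite[Theorem 2.3.8]{Tr1} makes the restriction automatic.
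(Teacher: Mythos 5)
Your proposal does not prove the statement it was asked to prove. Theorem \ref{T0} is the molecular \emph{synthesis} estimate quoted from \cite[Theorem 11.4]{Rou}: given any family $\{m_Q\}$ of smooth $(\delta,M,N)$--molecules satisfying (M.i)--(M.iii), the sum $\sum_{\nu,\tau}\langle f,m_{Q_{\nu\tau}}\rangle m_{Q_{\nu\tau}}$ lies in ${B}_{pq}^{s}(\mathbb{R},w)$ whenever the coefficient sequence lies in ${b}_{pq}^{s}(w)$, with the corresponding norm bound. What you have written is instead a proof sketch of Theorem \ref{T11} (equivalently Theorem \ref{ImagesA}): you speak of parts (i) and (ii), of the constants $\mathscr{N}_{0^\pm}^{\boldsymbol{\alpha}}$, of the hypothesis \eqref{usl}, of $r_w<\boldsymbol{\alpha}$, and of the two-weight discrete Hardy inequality --- none of which occur in Theorem \ref{T0}. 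Worse, your argument explicitly \emph{invokes} Theorem \ref{T0} as a black box (``Theorem \ref{T0} then converts the $B_{pq}^{s}(\mathbb{R},w)$-norm estimate into a discrete inequality''), so as a proof of Theorem \ref{T0} it would be circular.

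A genuine proof of Theorem \ref{T0} lives at a different level entirely: one must estimate $\varphi_j\ast m_{Q_{\nu\tau}}$ directly from the size, smoothness and moment conditions (M1)--(M4), obtaining almost-diagonal decay in $|j-\nu|$ with exponents governed by $\delta$, $M$ and $N$, and then sum using the $\mathscr{A}_\infty$ properties of $w$ (the doubling bound \eqref{doubl} and the comparison \eqref{Muck}, typically via a weighted maximal-function or Peetre-type argument) to control $\|\varphi_j\ast f\|_{L^p(\mathbb{R},w)}$ by the $b_{pq}^{s}(w)$-norm of $\lambda$. The constraints (M.ii)--(M.iii), i.e.\ $M>J$ and $N=\max\{[J-s-1],-1\}$ with $J$ depending on $r_w$, are exactly what make those sums converge; your proposal never engages with this mechanism. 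Note also that the paper itself offers no proof of Theorem \ref{T0}: it is imported verbatim from \cite{Rou}, so there is no internal argument for you to have reproduced --- but the task was to prove the quoted statement, not to apply it.
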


\section {Spline wavelet bases of natural and fractional orders} \label{SP}
We start from the notion of B--splines of natural \cite{Chui} and fractional orders \cite{UB}.

Put $B_0=\chi_{[0,1)}$. B--spline of order $n\in\mathbb{N}$ (see Figure 1) is defined by \begin{equation}\label{Bndef}
B_n(x):=(B_{n-1}\ast B_0)(x)=\int_0^1 B_{n-1}(x-t)\,dt=\frac{x}{n}B_{n-1}(x)
+\frac{n+1-x}{n}B_{n-1}(x-1).
\end{equation} 
\begin{figure*}[h]
\includegraphics[width=.3\textwidth]{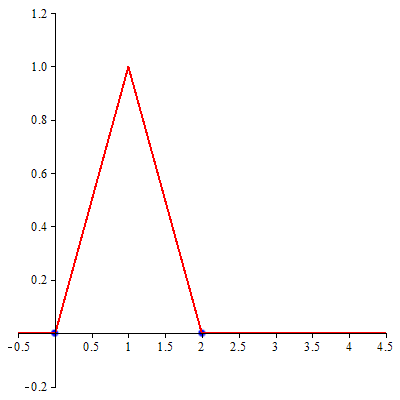}\hfill
\includegraphics[width=.3\textwidth]{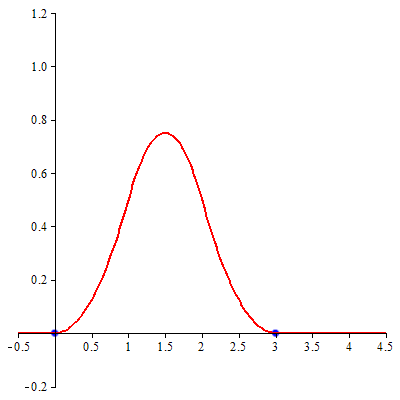}\hfill
\includegraphics[width=.3\textwidth]{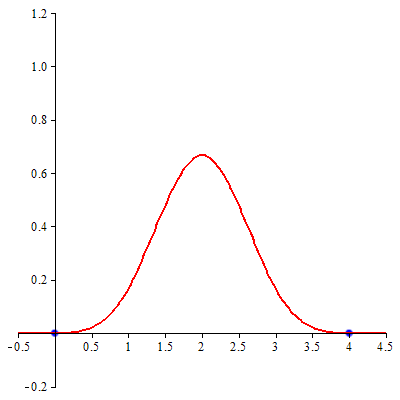}
\caption{Graphs of $B_n$ for $n=1,2,3$.}
\end{figure*}

\noindent{$B_n$ is continuous and $n-$times a.e. differentiable function on $\mathbb{R}$} 
{with ${\rm supp}\,B_n=[0,n+1]$; $B_n(x)>0$ for all $x\in(0,n+1)$ and the restriction of $B_n$ to each $[m,m+1]$,} {$m=0,\ldots,n$, is a polynomial of 
degree $n$.
}

\begin{figure*}[h]
\includegraphics[width=.5\textwidth]{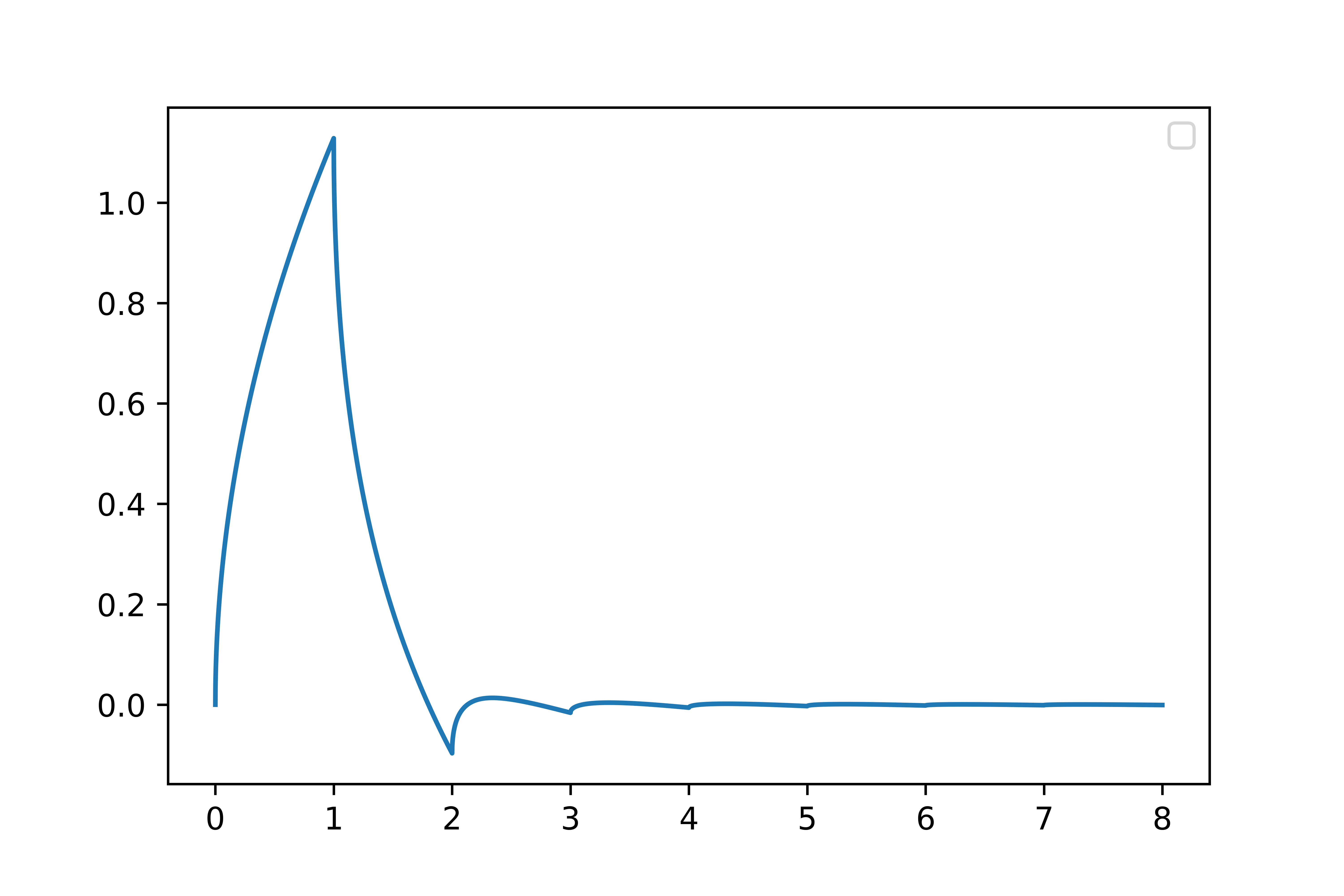}\hfill
\includegraphics[width=.5\textwidth]{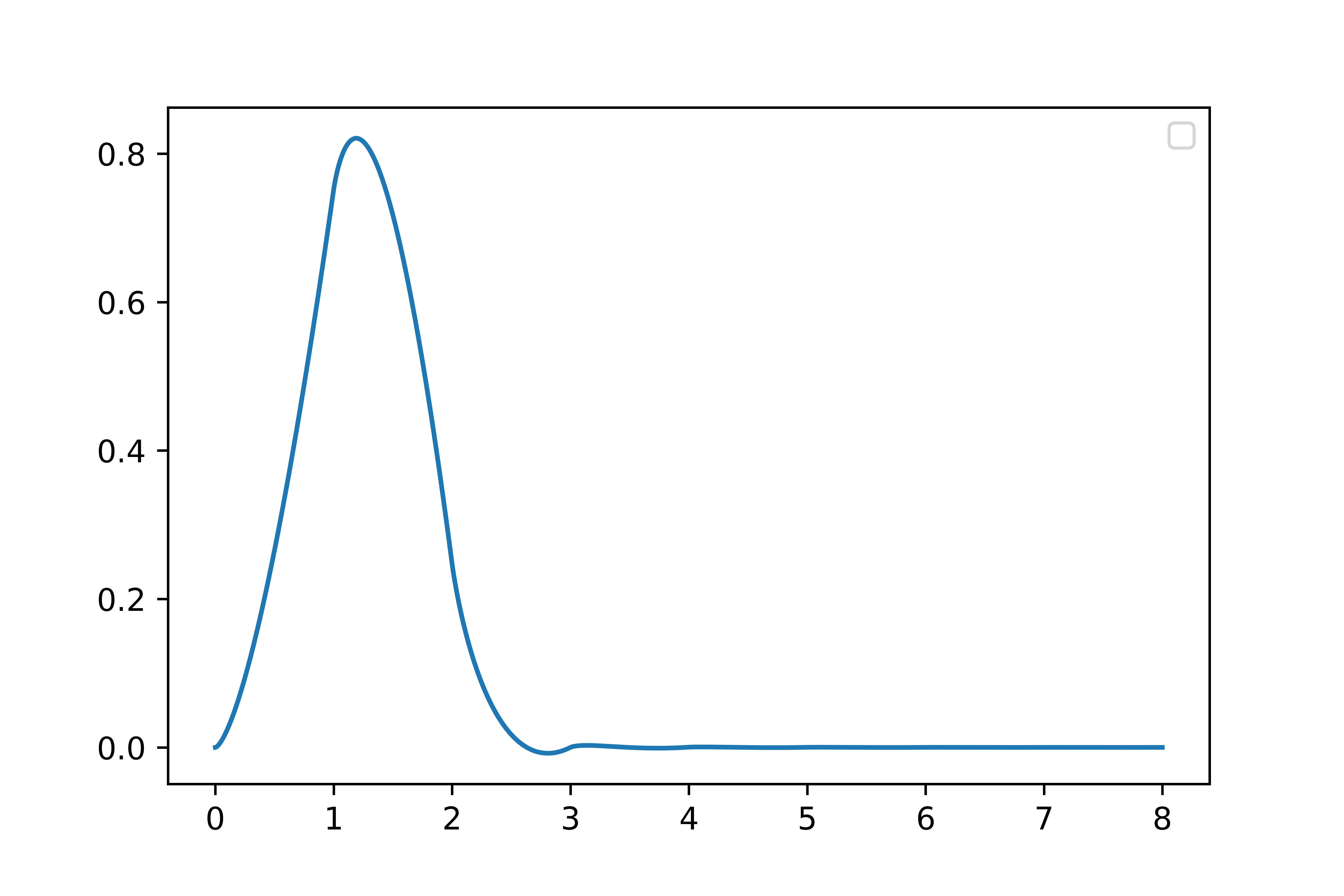}
\caption{Graphs of $\beta_+^{1/2}$ and $\beta_+^{3/2}$.}
\end{figure*}
B--splines were generalised to fractional degrees $\alpha>-1$ by M. Unser and T. Blu in \cite{UB,UB1}. Put $$x_+^\alpha:=\begin{cases} x^\alpha, & x\ge 0,\\ 0, & \textrm{otherwise},\end{cases}\qquad\qquad 
x_-^\alpha:=(-x)_+^\alpha, \qquad\qquad |x|_*^\alpha:=\begin{cases} \frac{|x|^\alpha}{-2\sin(\pi\alpha/2)}, & \alpha\quad\textrm{not even},\\
\frac{x^{\alpha}\log x}{(-1)^{\alpha/2+1}\pi}, & \alpha\quad\textrm{even},\end{cases}$$
{and remind} that $\binom{u}{v}=\frac{\Gamma(u+1)}{\Gamma(v+1)\Gamma(u-v+1)}$. 
Following the terminology adopted in \cite{UB}, fractional causal B--splines of order $\alpha>-1$ (see Figure 2) are defined by
\begin{equation*}
\beta_+^\alpha(x):=\frac{1}{\Gamma(\alpha+1)}\sum_{k\in\mathbb{N}_0}(-1)^k\binom{\alpha+1}{k}(x-k)_+^\alpha,
\end{equation*} 
\begin{figure*}[h]
\includegraphics[width=.5\textwidth]{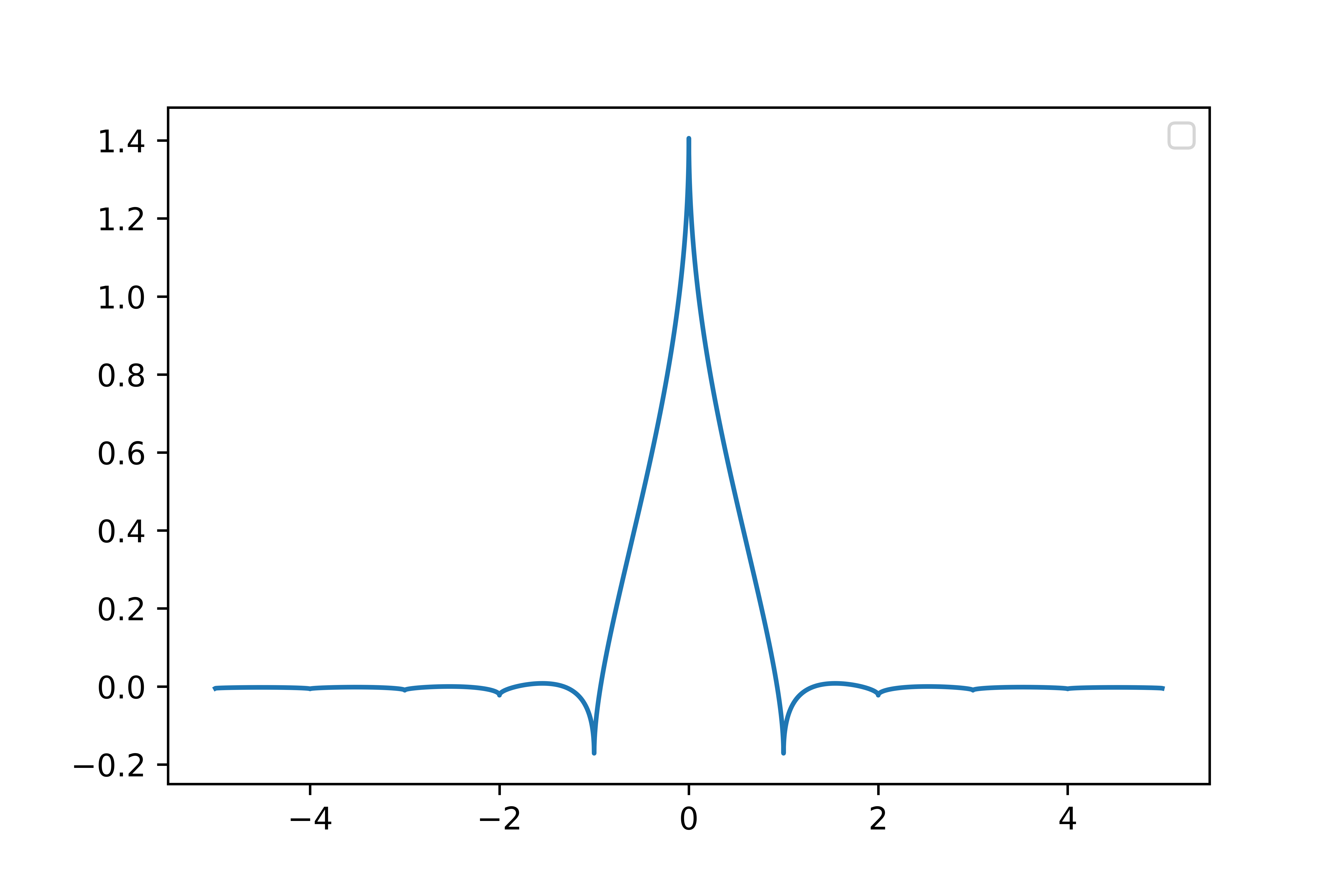}\hfill
\includegraphics[width=.5\textwidth]{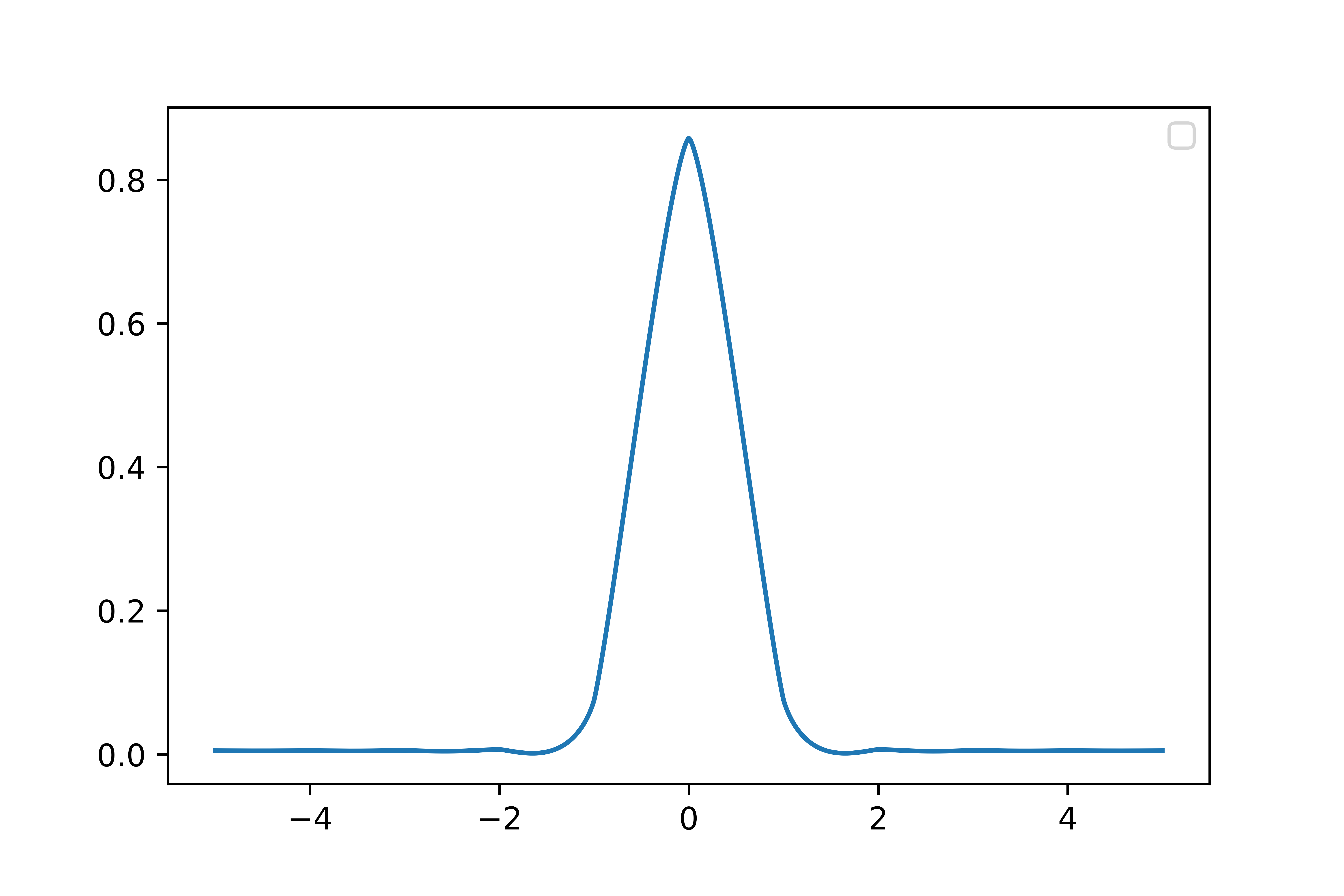}
\caption{Graphs of $\beta_\ast^{1/2}$ and $\beta_\ast^{3/2}$.}
\end{figure*}
the anticausal B--splines of fractional degree $\alpha$ --- 
\begin{equation*}
\beta_-^\alpha(x):=\frac{1}{\Gamma(\alpha+1)}\sum_{k\in\mathbb{N}_0}(-1)^k\binom{\alpha+1}{k}(x+k)_-^\alpha,
\end{equation*} symmetrical B--splines of fractional order $\alpha$ have (see Figure 3) the following form \cite{UB}
\begin{equation*}
\beta_*^\alpha(x):=\beta_+^{(\alpha-1)/2}\ast \beta_-^{(\alpha-1)/2}=
\frac{1}{\Gamma(\alpha+1)}\sum_{k\in\mathbb{Z}}(-1)^{k+1}
\binom{\alpha+1}{k+\alpha/2}|x-k|_*^\alpha.
\end{equation*}

Fractional B--splines are in $L^1(\mathbb{R})$ and in $L^2(\mathbb{R})$ for $\alpha>-1/2$, they decay proportionally to $|x|^{-\alpha-2}$, reproduce the polynomials of degree $[\alpha]+1$. Besides, $\beta^\alpha\in W_2^r$ for all $r<\alpha+1/2$. It holds ${\Gamma(\alpha+1)}\beta_+^\alpha={B_\alpha}$ for $\alpha\in\mathbb{N}$ . If $\alpha\not\in\mathbb{N}$ then B--splines have unbounded supports, there is no symmetry (except $\beta_*^\alpha$) and no positivity in comparison to $\beta_\pm^\alpha$ with $\alpha\in\mathbb{N}$. But, similarly to $B_n$, for $\alpha>-1/2$ the fractional B--spline of degree $\alpha$ generates a Riesz basis in the related subspace $V_0$ (see below) of $L^2(\mathbb{R})$ \cite[Proposition 3.3]{UB}. 

Let ${V}_\nu$, $\nu\in\mathbb{Z}$, denote the $L_2(\mathbb{R})-$closure of the linear span of the system $\bigl\{\beta_{+}^\alpha(2^\nu\cdot-\tau)\colon \tau\in\mathbb{Z}\bigr\}$. The spline spaces ${V}_\nu$, $\nu\in\mathbb{Z}$, constitute multiresolution analysis $\mathrm{MRA}_{\beta_{+}^\alpha}$ of $L_2(\mathbb{R})$ in the sense that \begin{itemize}
\item[{\rm (i)}] $\ldots\subset {V}_{-1}\subset {V}_0\subset {V}_1\subset\ldots$,
\item[{\rm (ii)}] $\mathrm{clos}_{L_2(\mathbb{R})}\Bigl(\bigcup_{\nu\in\mathbb{Z}} {V}_\nu\Bigr)=L_2(\mathbb{R})$,
\item[{\rm (iii)}] $\bigcap_{\nu\in\mathbb{Z}} {V}_\nu=\{0\}$,
\item[{\rm (iv)}] for each $\nu$ the $\bigl\{\beta_+^\alpha(2^\nu\cdot-\tau)\colon\tau\in\mathbb{Z}\bigr\}$ is an unconditional (but not orthonormal) basis of ${V}_\nu$.\end{itemize} 
Further, there are the orthogonal complementary subspaces $\ldots, {W}_{-1},{W}_0,{W}_1,\ldots$ such that 
\begin{itemize}
\item[{\rm (v)}] ${V}_{\nu+1}={V}_\nu\oplus {W}_\nu$ for all $\nu\in\mathbb{Z}$,
where $\oplus$ stands for ${V}_\nu\perp {W}_\nu$ and ${V}_{\nu+1}={V}_\nu+{W}_\nu$. 
\end{itemize} Wavelet subspaces ${W}_\nu$, $\nu\in\mathbb{Z}$, related to the spline $\beta_+^\alpha$, are also generated by some basis functions ({\it wavelets}) in the same manner as the spline spaces ${V}_\nu$, $\nu\in\mathbb{Z}$, are generated by the spline $\beta_+^\alpha$.
Observe that {for any fixed} $\boldsymbol{k}\in\mathbb{Z}$ the system $\bigl\{\beta_{+}^{\alpha,\boldsymbol{k}}(\cdot-\tau):=\beta_{+}^\alpha(\cdot-\boldsymbol{k}-\tau)\colon \tau\in\mathbb{Z}\bigr\}$ generates multiresolution analysis $\mathrm{MRA}_{\beta_{+}^{\alpha,\boldsymbol{k}}}$ of $L_2(\mathbb{R})$, and $\mathrm{MRA}_{\beta_{+}^{\alpha,\boldsymbol{k}}}=\mathrm{MRA}_{\beta_{+}^\alpha}$ for any $\boldsymbol{k}\in\mathbb{Z}$. The same is true for $\beta_-^\alpha$ and $\beta_*^\alpha$.

For non--natural $\alpha$, related to the scale functions $\beta_{\pm}^{\alpha}$ wavelet functions $\psi_\pm^\alpha$ were constructed in \cite{UB1}:
 \begin{equation}\label{WFdef+}
\psi_\pm^\alpha(x):=\sum_{k\in\mathbb{Z}}\frac{(-1)^k}{2^\alpha}\sum_{l\in\mathbb{Z}}\binom{\alpha+1}{l}\beta_*^{2\alpha+1}(l+ k -1)\beta_\pm^\alpha(2x-k).
\end{equation} These functions 
have $[\alpha]+1$ vanishing moments, and are the best possibly localised (see Figure 4). \begin{figure*}[h]
\includegraphics[width=.5\textwidth]{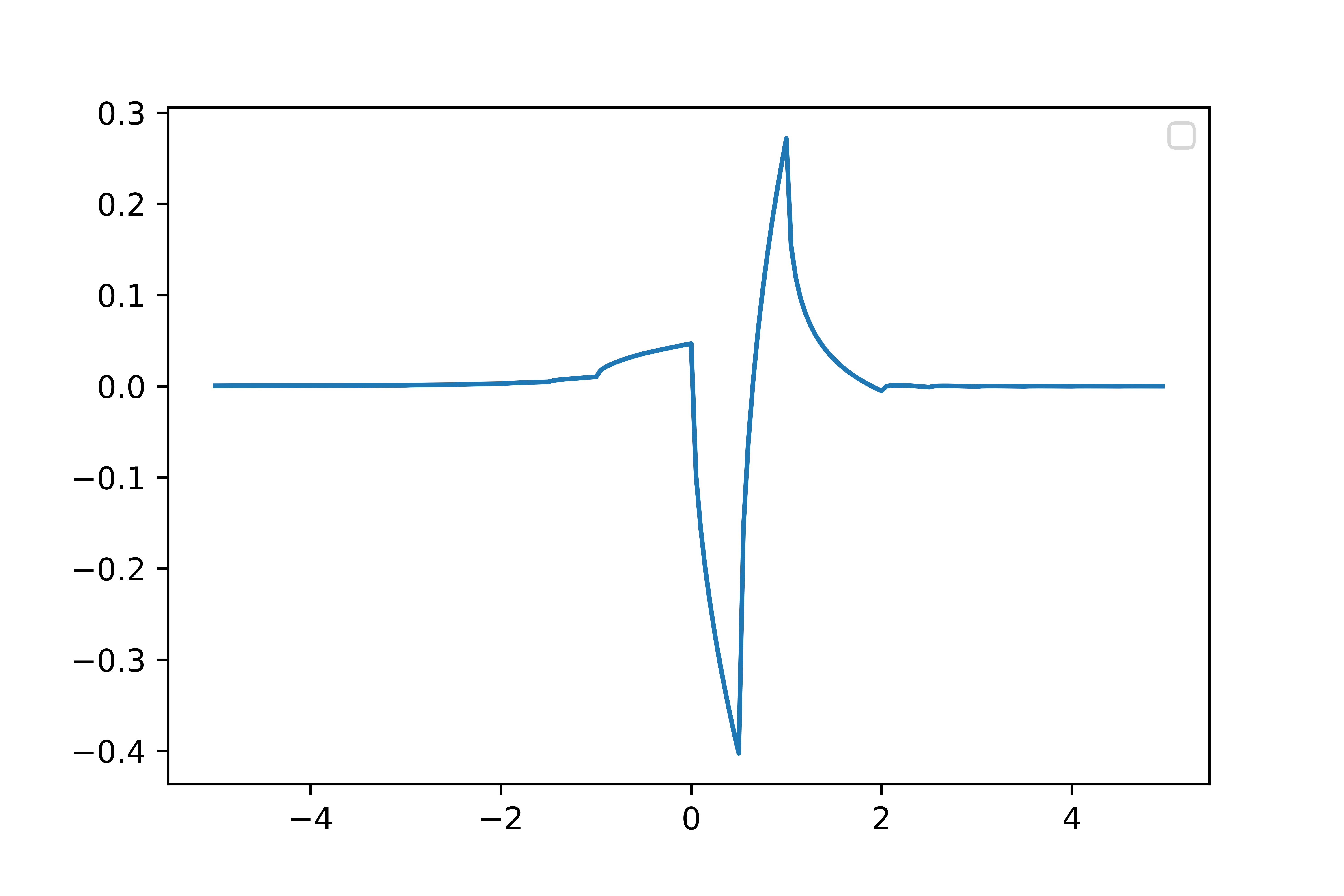}\hfill
\includegraphics[width=.5\textwidth]{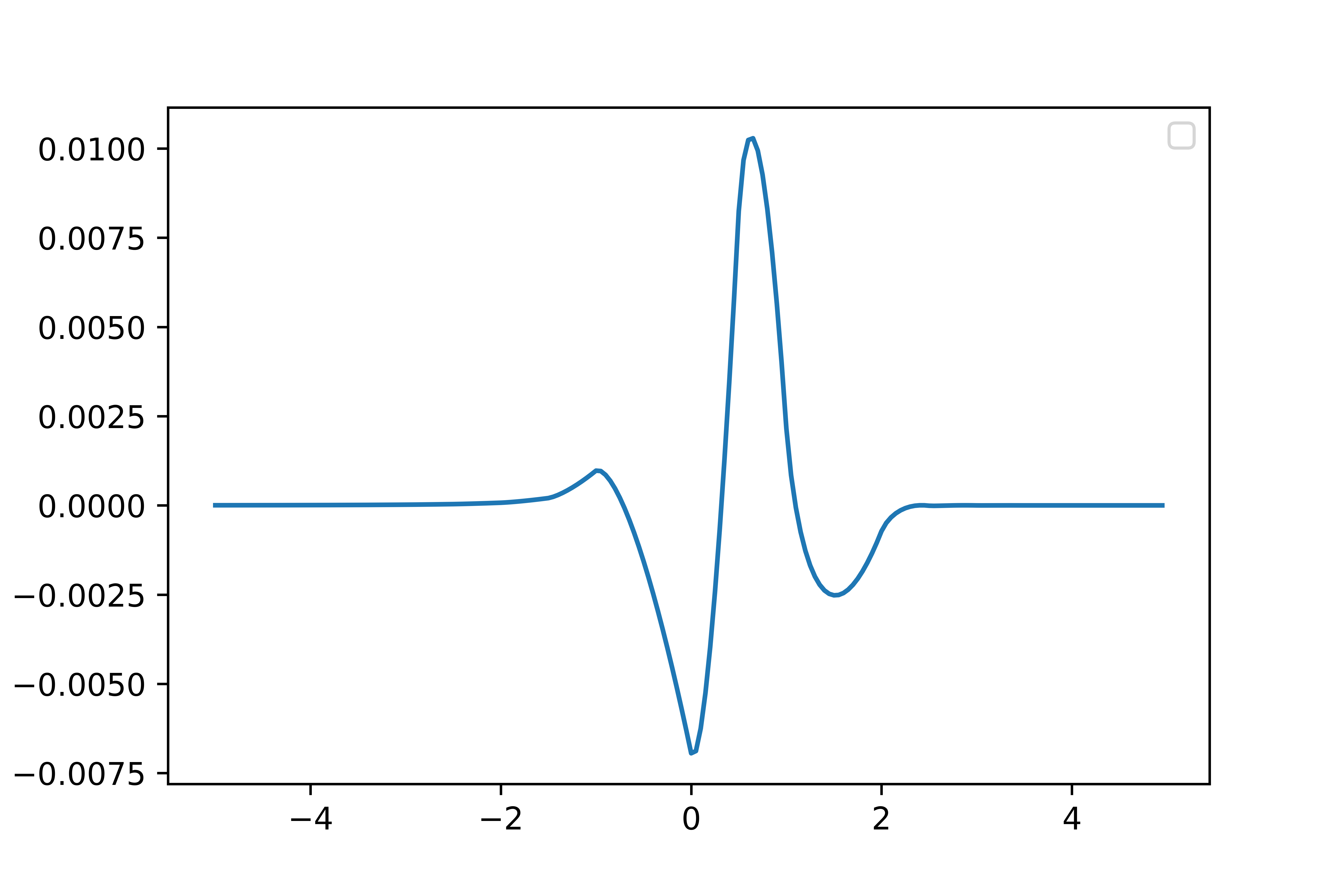}
\caption{Graphs of $\psi_+^{1/2}$ and $\psi_+^{3/2}$.}
\end{figure*}
The limit behaviour of $\psi_\pm^\alpha$ for positive $\alpha\not\in\mathbb{N}$ can be justified basing on \eqref{WFdef+} and \cite[Theorem 3.1]{UB}. 
\begin{theorem}{\cite[Theorem 3.1]{UB}}\label{T1} For all $\alpha>-1$ there exist positive constants $K_\alpha$ and $C_\alpha$ such that for $\beta^\alpha=\beta_+^\alpha$ or $\beta^\alpha=\beta_*^\alpha$ it holds that
\begin{equation}\label{vanish}
|\beta^\alpha(x)|\le\frac{K_\alpha\,\bigl\{\inf_{n\in\mathbb{Z}}|x-n|\bigr\}_*^\alpha+C_\alpha}{1+|x|^{\alpha+2}}.\end{equation}
More precisely, when $\alpha>0$, we have for $x$ tending to $+\infty$
\begin{align*}
\beta_+^\alpha(x)=&\frac{\Gamma(\alpha+2)\sin\pi\alpha}{\pi x^{\alpha+2}}\sum_{n\ge 1}\frac{\mathrm{e}^{2ni\pi x}}{(2ni\pi)^{\alpha+1}}+o\Bigl(\frac{1}{x^{\alpha+2}}\Bigr),
\\
\beta_*^\alpha(x)=&\frac{2\Gamma(\alpha+2)\cos(\pi\alpha/2)}{\pi x^{\alpha+2}}\sum_{n\ge 1}\frac{\cos({2n\pi x})}{(2n\pi)^{\alpha+1}}+o\Bigl(\frac{1}{x^{\alpha+2}}\Bigr).\end{align*}
\end{theorem}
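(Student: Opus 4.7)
The plan is to reduce both the pointwise bound and the sharp asymptotic expansion to Fourier analysis of the generator. Recall that for $\alpha>-1$ the causal fractional B--spline admits a closed Fourier form
\begin{equation*}
\widehat{\beta_+^\alpha}(\omega)=\biggl(\frac{1-e^{-i\omega}}{i\omega}\biggr)^{\alpha+1},
\qquad
\widehat{\beta_*^\alpha}(\omega)=\biggl|\frac{\sin(\omega/2)}{\omega/2}\biggr|^{\alpha+1},
\end{equation*}
up to normalisation. For $\alpha>0$ both lie in $L^1(\mathbb{R})$, so $\beta^\alpha$ is continuous and bounded, and the rate of decay of $\beta^\alpha(x)$ as $|x|\to\infty$ is governed by the location and type of the singularities of $\widehat{\beta^\alpha}$. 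These singularities are branch points of fractional order $\alpha+1$ sitting exactly at the zeros $\omega=2\pi n$, $n\in\mathbb{Z}\setminus\{0\}$, of $1-e^{-i\omega}$; the origin is a removable point since $(1-e^{-i\omega})/(i\omega)\to 1$.

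Near a fixed $\omega=2\pi n$ with $n\neq 0$, set $\eta=\omega-2\pi n$ and expand
\begin{equation*}
\widehat{\beta_+^\alpha}(\omega)=\frac{(i\eta)^{\alpha+1}}{(2\pi i n)^{\alpha+1}}\bigl(1+O(\eta/n)\bigr),
\end{equation*}
with an analogous real expansion for $\widehat{\beta_*^\alpha}$ involving $|\eta|^{\alpha+1}$. Multiply by a smooth cutoff $\varphi_n$ supported near $2\pi n$ and compute the local inverse Fourier transform. The key scalar identity is the Fourier transform of the homogeneous distribution
\begin{equation*}
\int_{\mathbb{R}}\eta_+^{\alpha+1}e^{ix\eta}\,d\eta=\Gamma(\alpha+2)(-ix)^{-\alpha-2},
\qquad x>0,
\end{equation*}
whose imaginary part and cosine analogue produce the prefactors $\sin\pi\alpha/\pi$ and $\cos(\pi\alpha/2)/\pi$, respectively. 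Summing the resulting contributions from $n\neq 0$ and regrouping by the carrier $e^{2\pi i n x}$ yields the leading terms displayed in the statement; the smooth (non--branch) complement of $\widehat{\beta^\alpha}$ produces, after integration by parts, a remainder that is $o(x^{-\alpha-2})$ as $x\to+\infty$.

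For the uniform pointwise bound one combines two ingredients. Away from integers, the asymptotic expansion above, applied on each fundamental interval $[m,m+1]$ with $m$ large and then transported to bounded $x$ by continuity of $\beta^\alpha$, gives the denominator $1+|x|^{\alpha+2}$ with a constant $C_\alpha$. Close to an integer point $n$, the finite sum $\frac{1}{\Gamma(\alpha+1)}\sum_k(-1)^k\binom{\alpha+1}{k}(x-k)_+^\alpha$ contains the non--smooth summand $(x-n)_+^\alpha$ whose H\"older--type behaviour is precisely of the magnitude $\{\inf_{n\in\mathbb{Z}}|x-n|\}_*^\alpha$ (with the logarithmic correction when $\alpha$ is even already baked into the definition of $|\cdot|_*^\alpha$); all other summands are $C^{[\alpha]+1}$ at $x=n$ and contribute to the constant $K_\alpha$.

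The main technical obstacle is uniform control of the remainder after summation over $n$. One must justify interchange of the sum over branch points with the inverse Fourier integral, and show that the $O(\eta/n)$ correction in the local expansion, when summed, produces an error no larger than $o(x^{-\alpha-2})$ uniformly for $x\to+\infty$. I would handle this by splitting $|n|\le N(x)$ and $|n|>N(x)$ for an appropriate $N(x)$, using repeated integration by parts on the smooth tail to gain arbitrary negative powers of $x$, and a direct fractional--integration estimate on the low--frequency block. The corresponding analysis for $\beta_*^\alpha$ is parallel but simpler, because the symmetry of $|\sin(\omega/2)|^{\alpha+1}$ eliminates odd contributions and yields the cosine sum rather than the exponential one.
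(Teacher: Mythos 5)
The paper contains no proof of this statement: Theorem \ref{T1} is imported verbatim from Unser and Blu \cite[Theorem 3.1]{UB}, so there is no internal argument to compare yours against. Judged on its own terms, your sketch reconstructs essentially the strategy of that source: read the decay of $\beta_+^\alpha$ and $\beta_*^\alpha$ off the branch--point singularities of $\widehat{\beta^\alpha}$ at $\omega=2\pi n$, $n\neq 0$, where the local models $(i\eta)^{\alpha+1}$ and $|\eta|^{\alpha+1}$ invert to $\Gamma(\alpha+2)\,x^{-\alpha-2}$ with the $\sin\pi\alpha$ and $\cos(\pi\alpha/2)$ prefactors coming from the phase of the branch. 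The leading--order computation is right and does produce the displayed sums over $n\ge 1$ carried by $\mathrm{e}^{2\pi inx}$ and $\cos(2n\pi x)$.

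Two points are materially thinner than the rest. First, the bound \eqref{vanish} is asserted for all $\alpha>-1$, while your opening reduction to absolutely convergent Fourier inversion requires $\alpha>0$: for $-1<\alpha\le 0$ one has $\widehat{\beta_+^\alpha}\notin L^1(\mathbb{R})$, and for $-1<\alpha<0$ the spline itself blows up like $|x-n|^\alpha$ at the integers, so both the near--integer term $K_\alpha\bigl\{\inf_{n\in\mathbb{Z}}|x-n|\bigr\}_*^\alpha$ and the decay $|x|^{-\alpha-2}$ must in that range be extracted from the time--domain series, using $\bigl|\binom{\alpha+1}{k}\bigr|\approx k^{-\alpha-2}$ for large $k$ and the telescoping of the finite differences of $(\cdot)_+^\alpha$; you gesture at the time--domain side for the local singularity but your framework does not cover the decay there. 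Second, the interchange of the sum over branch points with the inverse transform, and the uniformity in $x$ of the remainder after summing the $O(\eta/n)$ corrections over all $n$, is exactly where the work lies; your proposed splitting $|n|\le N(x)$ versus $|n|>N(x)$ is the right idea but is left entirely unexecuted. Neither point is a wrong turn, but as written the proposal is an outline of the Unser--Blu argument rather than a proof.
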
 
Analogous behaviour is characteristic of $\beta_-^\alpha$ and $\beta_*^\alpha$, when $x\to-\infty$. 

In view of $\binom{u}{k}=0$ for negative $k\in\mathbb{Z}$,
\begin{align}\label{WFdef'}
\psi_+^\alpha(x)=&\sum_{k\in\mathbb{Z}}\frac{(-1)^k}{2^\alpha}\sum_{l\in\mathbb{N}_0}\binom{\alpha+1}{l}\beta_*^{2\alpha+1}(l+k-1)\beta_+^\alpha(2x- k)\nonumber\\
=&2^{-\alpha}\sum_{m\in\mathbb{Z}}\beta_*^{2\alpha+1}(m-1)\sum_{k\le m}{(-1)^k}\binom{\alpha+1}{m-k}\beta_+^\alpha(2x- k)\nonumber\\=&2^{-\alpha}\sum_{m\in\mathbb{Z}}(-1)^m\beta_*^{2\alpha+1}(m-1)\sum_{n\ge 0}{(-1)^n}\binom{\alpha+1}{n}\beta_+^\alpha(2x- m+n).
\end{align} From here and by Theorem \ref{T1}, by taking into account  
$\binom{u}{r}\approx (-1)^r\frac{\Gamma(u+1)\sin(\pi u)}{\pi r^{u+1}}$ as $r\to+\infty$, we deduce consideration about algebraic rate of decay of $\psi_+^\alpha$ at $\infty$. Wavelets $\psi_-^\alpha$ have similar limiting behaviour. 

Proper translations and dilation of elements of semi--orthogonal spline wavelet system $\{\beta_\pm^\alpha,\psi_\pm^\alpha\}$ constitute a basis in $L^2(\mathbb{R})$ \cite{UB1}.
Spline wavelet systems of natural orders are considered in the next section.

\subsection{Battle--Lemari\'{e} families of natural orders} \label{BL}

Orthogonalisation process of $B$--splines of the form \eqref{Bndef} results in other scaling functions than $B_n$, named after G. Battle \cite{B,B1} and P.G. Lemarie--Rieusset \cite{L}, whose integer translations form orthonormal system within $\mathrm{MRA}_{B_{n,\boldsymbol{k}}}$. 
Constructions of the related orthogonal spline wavelet systems were established in \cite[\S\,2.2]{JMAA}, \cite[\S\,3]{RMC} and \cite[\S\,3]{PSI} (see also \cite{NoSt}).

For each $j=1,\ldots, n$ with $n\in\mathbb{N}$ we define $r_{j}(n):=(2\alpha_j(n)-1)-2\sqrt{\alpha_j(n)(\alpha_j(n)-1)}$ with some particular $\alpha_{j}(n)>1$. Then $r_{j}(n)\in(0,1)$ for all $j=1,\ldots,n$.  Put $\beta_n:=2^n\sqrt{\alpha_1(n)\,r_1(n)\ldots\alpha_n(n)\,r_n(n)}$ and define the $n-$th order Battle--Lemari\'{e} scaling function $\phi_{n,\boldsymbol{k}}$ via its Fourier transform as follows:
\begin{equation}\label{phi_nn}
\hat{\phi}_{n,\boldsymbol{k}}(\omega):={\beta_n\,\hat{B}_{n,\boldsymbol{k}}(\omega)}{\mathbf{A}_n^{-1}(\omega)},\end{equation}
where $\mathbf{A}_n(\omega):=\bigl(1+\mathrm{e}^{i\omega}r_1(n)\bigr)\ldots\bigl
(1+\mathrm{e}^{i\omega}r_n(n)\bigr)$ and parameter $\boldsymbol{k}\in\mathbb{Z}$ is fixed. 
Since 
$$\sum_{m\in\mathbb{Z}}\Bigl|\hat{\phi}_{n,\boldsymbol{k}}(\omega+2\pi m)\Bigr|^2=
\beta_n^2\bigl|\mathbf{A}_n(\omega)\bigr|^{-2}\sum_{m\in\mathbb{Z}}\Bigl|\hat{B}_{n,\boldsymbol{k}}(\omega+2\pi m)\Bigr|^2$$ and (see \cite[\S\,2]{JMAA}) $$\mathbb{P}_{n,\boldsymbol{k}}(\omega):=\sum_{m\in\mathbb{Z}}\Bigl|\hat{B}_{n,\boldsymbol{k}}(\omega+2\pi m)\Bigr|^2=\sum_{m\in\mathbb{Z}}\Bigl|\hat{B}_n(\omega+2\pi m)\Bigr|^2=\frac{1}{\beta_n^2}\bigl|\mathbf{A}_n(\omega)\bigr|^{2},$$ then
$$\sum_{m\in\mathbb{Z}}\Bigl|\hat{\phi}_{n,\boldsymbol{k}}(\omega+2\pi m)\Bigr|^2=
1,$$ that is, {for fixed} $\boldsymbol{k}\in\mathbb{Z}$ the system $\{\phi_{n,\boldsymbol{k}}(\cdot-\tau)\colon \tau\in\mathbb{Z}\}$ forms an orthonormal basis in ${V}_0$ of $\mathrm{MRA}_{{B}_n}$.

It follows from \eqref{phi_nn} and
\begin{equation}\label{ryadok}({\mathrm{e}^{i\omega}r+1})^{-1}=\sum_{l=0}^\infty \left(-r\,\mathrm{e}^{i\omega}\right)^l\qquad (0<r<1)\end{equation} that
\begin{equation}\label{ex2}\hat{\phi}_{n,\boldsymbol{k}}(\omega)=\frac{\beta_n\ \hat{B}_{n,\boldsymbol{k}}(\omega)}{\prod_{j=1}^n \bigl(1+\mathrm{e}^{i\omega}r_j(n)\bigr)}=
\beta_n \prod_{j=1}^n\sum_{l_j=0}^\infty \bigl(-r_j(n)\,\mathrm{e}^{i\omega}\bigr)^{l_j}\,\hat{B}_{n,\boldsymbol{k}}(\omega),\end{equation} that is, 
\begin{equation*}
{\phi}_{n,\boldsymbol{k}}(x)=\beta_n\sum_{l_1\ge 0}\bigl(-r_1(n)\bigr)^{l_1}\ldots\sum_{l_n\ge 0}\bigl(-r_n(n)\bigr)^{l_n}B_{n,\boldsymbol{k}}\bigl(x+l_1+\ldots+ l_n\bigr).\end{equation*}

Denote $\mathscr{A}_n(\omega):=\overline{\mathbf{A}_n(\omega+\pi)}=\bigl(1-\mathrm{e}^{-i\omega}r_1(n)\bigr)\ldots\bigl
(1-\mathrm{e}^{-i\omega}r_n(n)\bigr)$.
For some $\boldsymbol{k},\boldsymbol{s}\in\mathbb{Z}$ the Fourier transform of a wavelet function ${\psi}_{n,\boldsymbol{k},\boldsymbol{s}}$ related to the 
$\phi_{n,\boldsymbol{k}}$ (see Figures 5 and 6) has the form
\begin{equation}\label{Npsi_n}\hat{\psi}_{n,\boldsymbol{k},\boldsymbol{s}}(\omega)=\frac{\beta_n\,\mathrm{e}^{-i\omega\boldsymbol{s}}\,\mathrm{e}^{-i\omega/2}}{2^{n+1}\,\mathrm{e}^{i\pi(n+1+\boldsymbol{k})}}\,
\frac{\mathscr{A}_n(\omega/2)\,\bigl({\mathrm{e}^{i\omega/2}-1}\bigr)^{n+1}}{\mathbf{A}_n(-\omega)\mathbf{A}_n(\omega/2)}\,\hat{B}_{n}(\omega/2).\end{equation}
\begin{figure*}[h]
\includegraphics[width=.5\textwidth]{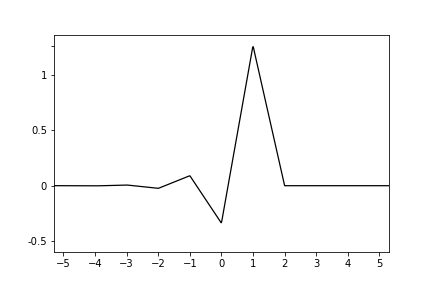}\hfill
\includegraphics[width=.5\textwidth]{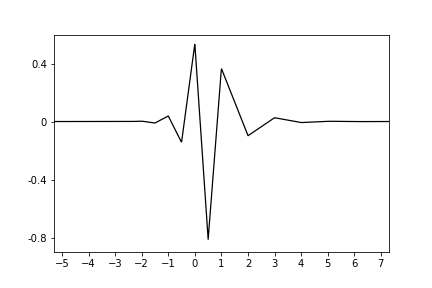}
\caption{Graphs of $\phi_{1,0}$ and $\psi_{1,0,0}$.}
\end{figure*} 

Multiplying the numerator and denominator in \eqref{Npsi_n} by $\mathscr{A}_n(-\omega/2)$ we obtain
\begin{equation}\label{Npsi_nL}\hat{\psi}_{n,\boldsymbol{k},\boldsymbol{s}}(\omega)=\frac{\beta_n\,\mathrm{e}^{-i\omega\boldsymbol{s}}\,\mathrm{e}^{-i\omega/2}}{2^{n+1}\,\mathrm{e}^{i\pi(n+1+\boldsymbol{k})}}\,
\frac{\bigl|\mathscr{A}_n(\pm\omega/2)\bigr|^2\,\bigl({\mathrm{e}^{i\omega/2}-1}\bigr)^{n+1}}{\mathbf{A}_n(-\omega)\mathcal{A}_n(\omega)}\,\hat{B}_{n}(\omega/2)\end{equation}
with $\mathcal{A}_n(\omega):=\mathbf{A}_n(\omega/2)\mathscr{A}_n(-\omega/2)=\bigl(1-\mathrm{e}^{i\omega}r_1^2(n)\bigr)\ldots\bigl
(1-\mathrm{e}^{i\omega}r_n^2(n)\bigr)$. 
\begin{figure*}[h]
\includegraphics[width=.5\textwidth]{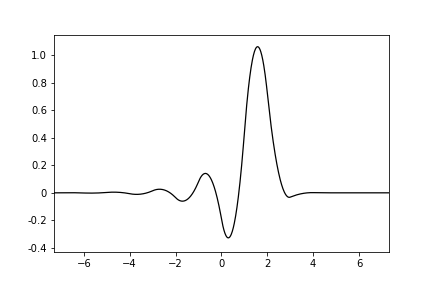}\hfill
\includegraphics[width=.5\textwidth]{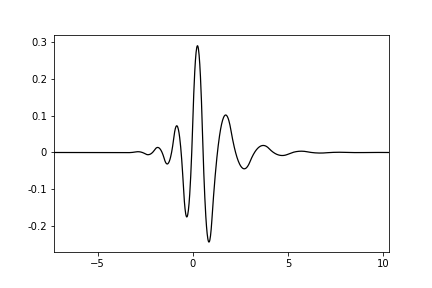}
\caption{Graphs of $\phi_{2,0}$ and $\psi_{2,0,0}$.}
\end{figure*}

Denote $\rho_j(n)=r_j(n)+1/r_j(n)$, $j=1,\ldots,n$. Since
\begin{equation*}\label{modul}
[1-\mathrm{e}^{- i\omega/2}r]\,[1-\mathrm{e}^{i\omega/2}r]=|1-\mathrm{e}^{\pm i\omega/2}r|^2=r\Bigl[\bigl(r+1/r\bigr)-\bigl(\mathrm{e}^{i\omega/2}+\mathrm{e}^{-i\omega/2}\bigr)\Bigr]\qquad(r>0)
\end{equation*} then
\begin{equation*}
\bigl|\mathscr{A}_n(\pm\omega/2)\bigr|^2=\bigl[r_1(n)\ldots r_n(n)\bigr]\sum_{j=0}^n(-1)^j\lambda_j(n)\cos(j\omega/2)\end{equation*} with $\lambda_n(n)=2$, $0<\lambda_j(n)=\lambda_j(\rho_1(n),\ldots,\rho_n(n))$ if $j\not=n$ and even $\lambda_j(n)$ for all $j\not=0$ \cite[\S\,3.1]{PSI}. Letting $\gamma_{n,\boldsymbol{k}}:=\bigl[r_1(n)\ldots r_n(n)\bigr]\beta_n{2^{-n}\cdot(-1)^{n+1+\boldsymbol{k}}}$,
from here and \eqref{Npsi_nL}, we obtain on the strength of \eqref{ryadok}:
\begin{align}\label{Npsihat}\hat{\psi}_{n,\boldsymbol{k},\boldsymbol{s}}(w)=&\frac{\gamma_{n,\boldsymbol{k}}}{2}\biggl\{\sum_{j=0}^n
\frac{\lambda_j(n)}{2(-1)^j}\bigl[\mathrm{e}^{ji\omega/2}+
\mathrm{e}^{-ji\omega/2}\bigr]\biggr\}\sum_{k=0}^{n+1}\frac{(-1)^k(n+1)!}{k!(n+1-k)!}\mathrm{e}^{(n-k)i\omega/2}\nonumber\\
&\times \prod_{j=1}^n\sum_{m_j=0}^\infty \bigl(-r_j(n)\,\mathrm{e}^{- i\omega}\bigr)^{m_j}\sum_{l_j=0}^\infty \bigl(r_j^2(n)\,\mathrm{e}^{ i\omega}\bigr)^{l_j}\ 
\hat{B}_{n}(\omega/2)\ \mathrm{e}^{ -i\omega\boldsymbol{s}}.\end{align}

Orthonormal wavelet systems $\{
{\phi}_{n,\boldsymbol{k}}, {\psi}_{n,\boldsymbol{k},\boldsymbol{s}}\}$ are from $\mathrm{MRA}_{B_n}$  
for any $\boldsymbol{k},\boldsymbol{s}\in\mathbb{Z}$. 
By the substitution $x=\tilde{x}+1/2$ into $B_n$ we arrive to  another type of Battle--Lemari\'{e} wavelet systems of natural orders $\{
\tilde{\phi}_{n,\boldsymbol{k}}, \tilde{\psi}_{n,\boldsymbol{k},\boldsymbol{s}}\}$, which are shifted with respect to $\{
{\phi}_{n,\boldsymbol{k}}, {\psi}_{n,\boldsymbol{k},\boldsymbol{s}}\}$ in $1/2$ to the left. These are from $\mathrm{MRA}_{\tilde{B}_n}$ generated by the shifted $B$--spline $\tilde{B}_n(x):=B_n(x+1/2)$ of natural order $n$. 
Basic properties of Battle--Lemari\'{e} wavelet systems are described in \cite[Proposition 3.1]{RMC}. These systems can be chosen to be $k$--smooth functions if $n\ge k+1$ having exponential decay with decreasing rate as $n$ increases \cite[\S~5.4]{Dau}.

\smallskip
{The Battle--Lemari\'{e} scaling and wavelet functions have unbounded} {supports on $\mathbb{R}$ (see \eqref{ex2} and \eqref{Npsihat}). In what follows we shall 
operate with their localised versions instead (see e.g. \cite[\S\,3.2]{RMC}, \cite[\S\,3.2]{PSI}).}
A localised version of ${\phi}_{n,\boldsymbol{k}}$ can be represented by a function ${\mathbf{\Phi}}_{n,\boldsymbol{k}}$ such that
\begin{equation}\label{phi_fint}
\hat{\mathbf{\Phi}}_{n,\boldsymbol{k}}(\omega)=\hat{\phi}_{n,\boldsymbol{k}}(\omega)\mathbf{A}_n(\omega)=\beta_n\,\hat{B}_{n,\boldsymbol{k}}(\omega).\end{equation} As a localised analogue of ${\psi}_{n,\boldsymbol{k},\boldsymbol{s}}$ we shall use a function $\mathbf{\Psi}_{n,\boldsymbol{k},\boldsymbol{s}}$ satisfying the condition
\begin{equation}\label{Npsihat'}\hat{\mathbf{\Psi}}_{n,\boldsymbol{k},\boldsymbol{s}}(\omega)=
\hat{\psi}_{n,\boldsymbol{k},\boldsymbol{s}}(\omega){\mathbf{A}_n(-\omega)\mathcal{A}_n(\omega)}=
\frac{\gamma_{n,\boldsymbol{k}}}{2\,\mathrm{e}^{i\omega\boldsymbol{s}}}\bigl|\mathscr{A}_n(\pm\omega/2)\bigr|^2\,\bigl|\sum_{k=0}^{n+1}\frac{(-1)^k(n+1)!}{k!(n+1-k)!}\mathrm{e}^{(n-k)i\omega/2}\,\hat{B}_{n}(\omega/2).\end{equation}

On the strength of \eqref{phi_fint} and \eqref{Npsihat'}, the ${\mathbf{\Phi}}_{n,\boldsymbol{k}}$ and $\mathbf{\Psi}_{n,\boldsymbol{k},\boldsymbol{s}}$ are compactly supported.
It also holds: \begin{equation}\label{Fu0}\bigl|\hat{\mathbf{\Phi}}_{n,\boldsymbol{k}}(\omega)\bigr|
=\bigl|\hat{\phi}_{n,\boldsymbol{k}}(\omega){\mathbf{A}_n(\omega)}\bigr|
=\beta_n\bigl|\hat{B}_{n,\boldsymbol{k}}(\omega)\bigr|,\end{equation}\begin{equation}\label{Fu}
\bigl|\hat{\mathbf{\Psi}}_{n,\boldsymbol{k},\boldsymbol{s}}(\omega)\bigr|
=\bigl|\hat{\psi}_{n,\boldsymbol{k},\boldsymbol{s}}(\omega){\mathbf{A}_n(\omega)}\mathcal{A}_n(\omega)\bigr|=\frac{|{\gamma}_{n,\boldsymbol{k}}|}{2}\bigl|\mathscr{A}_{n}(\pm\omega/2)\bigr|^2
\bigl|\mathrm{e}^{i\omega/2}-1\bigr|^{n+1}\bigl|\hat{B}_{n,\boldsymbol{s}}(\omega/2)\bigr|.\end{equation}  
It follows from \eqref{phi_fint} and \eqref{Npsihat'}{, respectively,} that
\begin{equation*}\label{dr}\mathbf{\Phi}_{n,\boldsymbol{k}}(x)={\beta}_{n}\,B_{n,\boldsymbol{k}}(x),\end{equation*}
\begin{equation}\label{psya}
\mathbf{\Psi}_{n,\boldsymbol{k},\boldsymbol{s}}(x)=\frac{{\gamma}_{n,\boldsymbol{k}}}{2^{n}}
\Bigl[
\sum_{j=0}^{n}\frac{\lambda_{j}(n)}{2(-1)^{j}}
\bigl[B_{2n+1}^{(n+1)}\bigl(2(x-\boldsymbol{s})+n+j\bigr)+B_{2n+1}^{(n+1)}\bigl(2(x-\boldsymbol{s})+n-j\bigr)\Bigr]
,\end{equation} where $2^{-n-1}B_{2n+1}^{(n+1)}(2\cdot)$ denotes the $(n+1)-$st order derivative of $B_{2n+1}(2\cdot)$.
Notice that $$\mathbf{\Phi}_{n,\boldsymbol{k}}=\sum_{\kappa=0}^n\boldsymbol{\alpha}'_\kappa\cdot \phi_{n,\boldsymbol{k}-\kappa}\quad\textrm{and}\quad
\mathbf{\Psi}_{n,\boldsymbol{k},\boldsymbol{s}}=\sum_{|\kappa|\le n}\boldsymbol{\alpha}''_\kappa\cdot \psi_{n,\boldsymbol{k},\boldsymbol{s}+\kappa}$$ with some $\boldsymbol{\alpha}'_\kappa$ and $\boldsymbol{\alpha}^{''}_\kappa$ satisfying 
\begin{equation}\label{positive0}\sum_{\kappa=0}^n\boldsymbol{\alpha}'_\kappa=\bigl(1+r_1(n)\bigr)\ldots\bigl(1+r_n(n)\bigr):=\mathbf{\Lambda}'_n>0,\end{equation}
\begin{equation}\label{positive}\sum_{|\kappa|\le n}\boldsymbol{\alpha}''_\kappa=\bigl(1+r_1(n)\bigr)\bigl(1-r_1^2(n)\bigr)\ldots\bigl(1+r_n(n)\bigr)\bigl(1-r_n^2(n)\bigr):=\mathbf{\Lambda}''_n>0\end{equation} (see \eqref{phi_fint}, \eqref{Npsihat'} and definitions of $\mathbf{A}_n(\omega)$ and $\mathscr{A}_n(\omega)$).
The $\mathbf{\Phi}_{n,\boldsymbol{k}}$ and $\mathbf{\Psi}_{n,\boldsymbol{k},\boldsymbol{s}}$ are compactly supported with \begin{equation*}
\textrm{supp}\,\mathbf{\Phi}_{n,\boldsymbol{k}}=[\boldsymbol{k},\boldsymbol{k}+n+1]\quad\textrm{and}\quad\textrm{supp}\,\mathbf{\Psi}_{n,\boldsymbol{k},\boldsymbol{s}}=[\boldsymbol{s}-n/2,\boldsymbol{s}+3n/2+1].\end{equation*}

The functions $\mathbf{\Phi}_{n,\boldsymbol{k}}$ and $\mathbf{\Psi}_{n,\boldsymbol{k},\boldsymbol{s}}$ are finite linear combinations of integer translations of $\phi_{n,\boldsymbol{k}}$ and $\psi_{n,\boldsymbol{k},\boldsymbol{s}}$, respectively, which are elements of the same orthonormal basis in $\textrm{MRA}_{B_n}$ of $L_2(\mathbb{R})$. On the strength of \eqref{Fu0} the system $\{\mathbf{\Phi}_{n,\boldsymbol{k}}(\cdot-\tau)\colon \tau\in\mathbb{Z}\}$ forms a Riesz basis in the subspace ${V}_0\subset L_2(\mathbb{R})$ related to $\textrm{MRA}_{B_n}$. 
At the same time, integer translates of $\mathbf{\Psi}_{n,\boldsymbol{k},\boldsymbol{s}}$ form a Riesz basis in ${W}_0\subset L_2(\mathbb{R})$ related to $\textrm{MRA}_{B_n}$. The both facts are confirmed by the forms of $|\hat{\mathbf{\Phi}}_{n,\boldsymbol{k}}|$ and $|\hat{\mathbf{\Psi}}_{n,\boldsymbol{k},\boldsymbol{s}}|$ (see \eqref{Fu0}, \eqref{Fu}). Similarly to the situation with ${\phi}_{n,\boldsymbol{k}}$ and ${\psi}_{n,\boldsymbol{k},\boldsymbol{s}}$, instead of $\{\mathbf{\Phi}_{n,\tilde{\boldsymbol{k}}},\mathbf{\Psi}_{n,\boldsymbol{k},\tilde{\boldsymbol{s}}}\}$ one can operate with 
the localised systems $\{\tilde{\mathbf{\Phi}}_{n,\boldsymbol{k}},\tilde{\mathbf{\Psi}}_{n,\boldsymbol{k},\boldsymbol{s}}\}$ related to $\{\tilde{\phi}_{n,{\boldsymbol{k}}},\tilde{\psi}_{n,\boldsymbol{k},{\boldsymbol{s}}}\}$ and shifted in $1/2$ to the right with respect to $\{\mathbf{\Phi}_{n,\boldsymbol{k}},\mathbf{\Psi}_{n,\boldsymbol{k},\boldsymbol{s}}\}$. For more properties of the localised analogues of ${\phi}_{n,\boldsymbol{k}}$ and ${\psi}_{n,\boldsymbol{k},\boldsymbol{s}}$ one can consult \cite[\S\,3.2]{PSI}.

\subsection{Spline wavelet bases of fractional orders} \label{FR}
As it was already mentioned before, spline wavelet systems $\{\beta_\pm^\alpha,\psi_\pm^\alpha\}$ of fractional orders $\alpha$ constitute semi--orthogonal bases 
in $L^2(\mathbb{R})$ \cite{UB1}. For $\alpha>-1/2$ integer shifts of the scaling function $\beta_+^\alpha$ (or $\beta_-^\alpha$) form a basis in 
$V_0\in L^2(\mathbb{R})$ related to $\mathrm{MRA}_{\beta_{+}^\alpha}$ (or $\mathrm{MRA}_{\beta_{-}^\alpha}$). Simultaneously, proper translations of dyadic 
dilations of the wavelet function $\psi_+^\alpha$ (or $\psi_-^\alpha$) constitute a basis in $W_d\in L^2(\mathbb{R})$, $d\in\mathbb{N}_0$, within the same multi--resolution analysis. It was mention in \cite[Conclusion]{UB1} that fractional wavelet filters decay reasonably fast for $\alpha>0$. This fact restricts our consideration of $\{\beta_\pm^\alpha,\psi_\pm^\alpha\}$ to positive $\alpha$ only. Besides, for our purposes, instead of $\psi_\pm^\alpha$, we will operate with non--degenerate finite linear combinations of $\psi_\pm^\alpha$ of the following forms
\begin{equation}\label{BigPsi}{\mathit{\Psi}}_{\pm}^\alpha(x):=\sum_{j=0}^{n}\frac{\lambda_{j}(n)}{2(-1)^{j}}\bigl[\psi_\pm^\alpha(x+n+j)
+\psi_\pm^\alpha(x+n-j)\bigr]\qquad (n\in\mathbb{N}),\end{equation}
that is, such that (see \eqref{psya}) \begin{equation*}
\hat{\mathit{\Psi}}_{\pm}^\alpha(\omega)=\psi_\pm^\alpha(\omega)\bigl|\mathscr{A}_n(\pm\omega/2)\bigr|^2.\end{equation*} Since $0<r_j(n)<1$ for all $j=1,\ldots,n$, it holds $0<\bigl|\mathscr{A}_n(\pm\omega/2)\bigr|\le 1$. Therefore, proper shifts of ${\mathit{\Psi}}_{\pm}^\alpha$ are bases in related $W_0$, and the systems $\{\beta_\pm^\alpha,\mathit{\Psi}_\pm^\alpha\}$ constitute semi--orthogonal bases of $L^2(\mathbb{R})$. 

We need to confirm that the systems $\{\beta_\pm^\alpha,\mathit{\Psi}_\pm^\alpha\}$ generate families of smooth molecules for $B_{pq}^s(\mathbb{R},w)$ with $1\le p<\infty$, $0<q\le\infty$, $s\in\mathbb{R}$ and $w\in\mathscr{A}_\infty$ having $r_w$ as in \eqref{param}. To do this we define $J$ as in (M.ii), put $N$ as in (M.iii) and fix some $\delta$ as in (M.i). The choice of $M>J$ from (M.ii) will depend on $\alpha$.

Let us start from $\nu=0$. There exists $c_0>0$ such that $\{c_0\beta_+^\alpha(\cdot-\tau)\}_{\tau\in\mathbb{Z}}$ are ($\delta,M,N$)-- smooth molecules $m_{Q_{0\tau}}$. Indeed, on the strength of Theorem \ref{T1}, for each $\tau$ the condition (M3*) is satisfied for $\gamma=0$ and $M\le\alpha+2$. This means that (M3*) is verified in the case, when the biggest $\gamma$, which can be chosen for given $s$, is zero. To check (M3*) for $0<\gamma\le [s]$ in the case $1\le[s]<\alpha$, we notice that
\begin{equation}\label{26_01}\binom{\alpha+1}{k}=\binom{\alpha}{k}+\binom{\alpha}{k-1}=\ldots=\sum_{j=0}^\gamma\binom{\gamma}{j}\binom{\alpha+1-\gamma}{k-j}.\end{equation} Therefore, \begin{equation}\label{26_02} D^\gamma\beta_+^\alpha(x-\tau)=\sum_{j=0}^\gamma(-1)^j\binom{\gamma}{j}\beta_+^{\alpha-\gamma}(x-\tau-j).\end{equation} For simplicity, assume $\tau=0$. Then
\begin{equation}\label{26_03}D^\gamma\beta_+^\alpha(x)=\begin{cases}\beta_+^{\alpha-\gamma}(x), & 0\le x\le 1,\\
\beta_+^{\alpha-\gamma}(x)-\binom{\gamma}{1}\beta_+^{\alpha-\gamma}(x-1), & 1\le x\le 2,\\\ldots\\
\sum_{j=0}^\gamma(-1)^j\binom{\gamma}{j}\beta_+^{\alpha-\gamma}(x-\tau-j), & x\ge \gamma.\end{cases}\end{equation} By this and from Theorem \ref{T1} we obtain (M3*) for $c_0\beta_+^\alpha$ with  $0<c_0=c_0(\gamma)<1$ and $0<M\le \alpha-\gamma+2$.

To verify (M4*) for $c_0\beta_+^\alpha(\cdot-\tau)$ with $\tau=0$ in the case $\gamma=0$ we begin from $x$ and $y$ satisfying $|x-y|<1$ and assume, for simplicity, that $x>y$.
If $0<x\le 1$ then (M4*) follows from the estimate $$\beta_+^\alpha(x)-\beta_+^\alpha(y)=x^\alpha-y^\alpha\le \max\{\alpha,1\}x^{\alpha-1}(x-y)\lesssim (x-y)^\delta$$ based on \cite[p. 139]{Prokh}. If $0<y\le 1<x<2$ then $$\bigl|\beta_+^\alpha(x)-\beta_+^\alpha(y)\bigr|=\bigl|x^\alpha-y^\alpha-(\alpha+1)(x-1)^\alpha\bigr|
\le \max\{\alpha,1\}x^{\alpha-1}(x-y)+(\alpha+1)(x-y)^\alpha\lesssim \frac{(x-y)^\delta}{(1+y)^M}.$$ This entails the required property, by letting $z=x-y$ and taking the supremum over all $|z|\le|x-y|$. Analogously to the case $0<x\le 1$ we confirm (M4*) for $1<y<x\le 2$, by applying the estimate:
\begin{multline*}\bigl|\beta_+^\alpha(x)-\beta_+^\alpha(y)\bigr|=\bigl|x^\alpha-y^\alpha-(\alpha+1)\bigl[(x-1)^\alpha-(y-1)^\alpha\bigr]\bigr|\\\le \max\{\alpha,1\}\bigl[x^{\alpha-1}+(\alpha+1)(x-1)^{\alpha-1}\bigr](x-y)
\le (\alpha+1)
\bigl[x^{\alpha-1}+(x-1)^{\alpha-1}\bigr](x-y)\lesssim (x-y)^\delta.\end{multline*} The case $1<y\le 2<x<3$ can be verified basing on the inequality
\begin{multline*}\bigl|\beta_+^\alpha(x)-\beta_+^\alpha(y)\bigr|=\bigl|x^\alpha-y^\alpha-(\alpha+1)\bigl[(x-1)^\alpha-(y-1)^\alpha\bigr]+\alpha(\alpha+1)(x-2)^\alpha\bigr|\\
\lesssim \bigl[x^{\alpha-1}+(x-1)^{\alpha-1}+(x-y)^{\alpha-1}\bigr](x-y).\end{multline*} For $2<y<x$ we can write by \eqref{26_01} and in view of $\beta_+^\alpha$ is $\alpha-$H\"{o}lder continuous for each $\alpha>-1$: $$\beta_+^\alpha(x)-\beta_+^\alpha(y)=\alpha \int_y^x\bigl[\beta_+^{\alpha-1}(t)-\beta_+^{\alpha-1}(t-1)\bigr]\,dt.$$
Then, on the strength of Theorem \ref{T1},
$$\bigl|\beta_+^\alpha(x)-\beta_+^\alpha(y)\bigr|\lesssim \int_y^x\biggl[\frac{1}{1+t^{\alpha+1}}+\frac{1}{1+(t-1)^{\alpha+1}}\biggr]\,dt\le (x-y)
\biggl[\frac{1}{1+y^{\alpha+1}}+\frac{1}{1+(y-1)^{\alpha+1}}\biggr].$$ Since $(y-1)/y\ge 1/2$ for $y>2$ then $\frac{1}{1+(y-1)^{\alpha+1}}\lesssim \frac{1}{1+y^{\alpha+1}}$, and now the property (M4*) for the case $|x-y|<1$ follows by letting $z=x-y$ and taking the supremum over all $|z|\le|x-y|$. 

If $x-y\ge 1$ then, from  Theorem \ref{T1}, we obtain with some proper $c_0$
$$c_0\bigl|\beta_+^\alpha(x)-\beta_+^\alpha(y)\bigr|\le 2^{-1} \Bigl[(1+x)^{-\alpha-2}+(1+y)^{-\alpha-2}\Bigr]\le (1+y)^{-\alpha-2}.$$ Letting $z=x-y$ and taking the supremum over $|z|\le|x-y|$ we arrive to the estimate
\begin{equation*}
c_0\bigl|\beta_+^\alpha(x)-\beta_+^\alpha(y)\bigr|\le (1+y)^{-\alpha-2}\le (x-y)^\delta\sup_{|z|\le|x-y|}\bigl(1+|x-z|\bigr)^{-\alpha-2}.\end{equation*} 

For $\gamma\ge 1$ the property (M4*) can be verified by using \eqref{26_01}--\eqref{26_03} supplied $0<M\le\alpha-\gamma+1$.

For $\nu\in\mathbb{N}$ one can check suitability of $\{c\,2^{\nu/2}\mathit{\Psi}_+^\alpha(2^\nu\cdot-\tau)\}_{\tau\in\mathbb{Z}}$ with $0<c=c([s])<1$ to 
(M2)--(M4) similarly to how it was above done for $\{c_0\beta_+^\alpha(\cdot-\tau)\}_{\tau\in\mathbb{Z}}$, taking into account the algebraic rate of decay of the coefficients in \eqref{WFdef'} respectively to $\beta_+^\alpha$. The (M1) is satisfied for $N\le [\alpha]+1$. Observe that (M2) requires $0<M\le \alpha+2+\min\{0,s\}$. 
Summing up, one should choose $0<M\le \begin{cases}\alpha+1-[s], &s\ge  -1,\\ \alpha+2+s, & s<-1.\end{cases}$

\section{Spline wavelet decomposition in $B_{pq}^s(\mathbb{R},w)$ with $w\in\mathscr{A}_\infty$}\label{DEC}

Fix $\alpha>0$, $\boldsymbol{k}$ and $\boldsymbol{s}$.  
As in \S\,3, let ${V}_{\nu}$, $\nu\in\mathbb{N}_0$, denote $\mathrm{MRA}_{{\beta}_\pm^\alpha}$ of the space $L_2(\mathbb{R})$. Put
\begin{equation}\label{vazhnoo}\widetilde{\mathbf{\Phi}}(x):=\begin{cases}\mathbf{\Phi}_{\alpha,\boldsymbol{k}}(x)/\mathbf{\Lambda}'_{\alpha}, &\alpha\in\mathbb{N},\\
c_0\,{\beta}_\pm^{\alpha,\boldsymbol{k}}(x), &\alpha\not\in\mathbb{N}\end{cases}\quad\textrm{and}\quad \widetilde{\mathbf{\Psi}}(x):=\begin{cases}\mathbf{\Psi}_{\alpha,\boldsymbol{k},\boldsymbol{s}}(x)/\mathbf{\Lambda}''_{\alpha}, &\alpha\in\mathbb{N},\\
c\,\mathit{\Psi}_\pm^{\alpha,\boldsymbol{k},\boldsymbol{s}}(x), &\alpha\not\in\mathbb{N}\end{cases}\end{equation} with $\mathbf{\Lambda}'_{\alpha}$ and $\mathbf{\Lambda}''_{\alpha}$ as in \eqref{positive0} and \eqref{positive}. Spline wavelet system $\bigl\{\widetilde{\mathbf{\Phi}},\widetilde{\mathbf{\Psi}}\bigr\}$ of order $\alpha$ constitutes a (semi--orthogonal) Riesz basis (in ${V}_0$ and ${W}_0$, respectively) of order $\alpha$. 
For $x\in\mathbb{R}$ we denote \begin{equation}\label{ForRepr'}\widetilde{\mathbf{\Phi}}_{\tau}(x):=\widetilde{\mathbf{\Phi}}(x-\tau)\quad\textrm{and}\quad \widetilde{\mathbf{\Psi}}_{\nu\tau}(x):=2^{\nu/2}\widetilde{\mathbf{\Psi}}_(2^\nu x-\tau) \qquad  (\tau\in\mathbb{Z},\, \nu\in\mathbb{N}_0).\end{equation}

\smallskip
Characterisation of $B_{pq}^{s}(\mathbb{R},w)$ by spline wavelets of natural orders was performed in \cite[\S~4.3]{PSI} (see also \cite{RMC}). For $w\in\mathscr{A}_\infty$ with ${r}_w$ of the form \eqref{param} we put $\sigma_{p}(w):=\frac{{r}_w}{\min\{p,{r}_w\}}-2+{r}_w$.
\begin{theorem}\label{main'}
Let $0<p<\infty$, $0<q\le\infty$, $s\in\mathbb{R}$ and $w\in\mathscr{A}_\infty$. Let $\widetilde{\mathbf{\Phi}},\,\widetilde {\mathbf{\Psi}}$ be functions satisfying \eqref{vazhnoo} with $\alpha\in\mathbb{N}$. We assume \begin{equation}\label{condB}\alpha\ge \max\Bigl\{0,[s]+1,{\bigr[({r}_w-1)/p-s\bigr]}+1,\bigl[\sigma_{p}(w)-s\bigr]\Bigr\}+1.\end{equation} Then $f\in\mathscr{S}'(\mathbb{R})$ belongs to $B_{pq}^{s}(\mathbb{R},w)$ if and only if it can be represented as
\begin{equation}\label{function}
f=\sum_{\tau\in\mathbb{Z}} \lambda_{0\tau}\widetilde{\mathbf{\Phi}}_\tau+
\sum_{\nu\in\mathbb{N}}\sum_{\tau\in\mathbb{Z}}\lambda_{\nu\tau}2^{-\nu/2}\widetilde{\mathbf{\Psi}}_{(\nu-1)\tau},
\end{equation} where $\lambda\in b_{pq}^{s}(w)$ and the series converges in $\mathscr{S}'(\mathbb{R})$. This representation is unique with
\begin{equation}\label{asty}\lambda_{0\tau}=\langle f,\widetilde{\mathbf{\Phi}}_{\tau}\rangle\quad(\tau\in\mathbb{Z}),\qquad   \lambda_{\nu\tau}=2^{\nu/2}\langle f,\widetilde{\mathbf{\Psi}}_{(\nu-1)\tau}\rangle\quad(\nu\in\mathbb{N},\,\tau\in\mathbb{Z})\end{equation}
and $I\colon f\mapsto\bigl\{\lambda_{d\tau}\bigr\}$ is a linear isomorphism of $B_{pq}^{s}(\mathbb{R},w)$ onto $b_{pq}^{s}(w)$. Besides, \begin{equation*}
\|f\|_{B_{pq}^{s}(\mathbb{R},w)}\approx \|{\lambda}\|_{{b}_{pq}^{s}(w)}.\end{equation*}\end{theorem}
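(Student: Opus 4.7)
The strategy is to establish the equivalence by combining two complementary decomposition results: Theorem \ref{T0} provides the synthesis estimate, while the atomic counterpart from \cite[Theorem 4.7]{RMC} and \cite[\S\,4.3]{PSI} provides the analysis estimate. The work then consists of verifying that under the hypothesis \eqref{condB} the normalised localised Battle-Lemari\'{e} system $\{\widetilde{\mathbf{\Phi}}_\tau\}\cup\{2^{-\nu/2}\widetilde{\mathbf{\Psi}}_{(\nu-1)\tau}\}_{\nu,\tau}$ simultaneously forms a family of smooth molecules and a family of smooth atoms for $B_{pq}^s(\mathbb{R},w)$. Uniqueness and the isomorphism claim will then follow from the Riesz-basis structure recorded in \S\,\ref{BL} together with the biorthogonality formulas \eqref{asty}.

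For the synthesis (``if'') direction I would verify (M2*)--(M4*) for $\widetilde{\mathbf{\Phi}}_\tau$ and (M1)--(M4) for $2^{-\nu/2}\widetilde{\mathbf{\Psi}}_{(\nu-1)\tau}$ and then invoke Theorem \ref{T0}. Compactness of the supports computed at the end of \S\,\ref{BL} trivialises the algebraic decay factor $(1+2^{\nu}|x-x_Q|)^{-M}$ for any $M$ as large as needed, so (M2), (M2*) and the decay parts of (M3), (M4) are free. The pointwise and H\"{o}lder smoothness bounds (M3)--(M4) follow by direct differentiation of the explicit formula \eqref{psya}, in which $\widetilde{\mathbf{\Psi}}$ appears as a finite linear combination of $(\alpha+1)$-st derivatives of $B_{2\alpha+1}$, so $\widetilde{\mathbf{\Psi}}\in C^{\alpha-1}$ with H\"{o}lder top derivative; this forces $\alpha-1\ge[s]$, which is built into the first bound of \eqref{condB}. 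Vanishing moments (M1) of order $\le\alpha$ are encoded in the factor $(\mathrm{e}^{i\omega/2}-1)^{n+1}$ appearing in \eqref{Npsihat'}, and the second bound in \eqref{condB} supplies $\alpha\ge N=\max\{[J-s-1],-1\}$ with $J=r_w/p+1/p'$ (resp.\ $J=r_w$ if $p=1$). Theorem \ref{T0} then yields $\|f\|_{B_{pq}^s(\mathbb{R},w)}\lesssim\|\lambda\|_{b_{pq}^s(w)}$ and convergence of the series \eqref{function} in $\mathscr{S}'(\mathbb{R})$.

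For the analysis (``only if'') direction I would invoke the atomic decomposition of $B_{pq}^s(\mathbb{R},w)$ from \cite[Theorem 4.7]{RMC}/\cite[\S\,4.3]{PSI}, identifying the coefficients \eqref{asty} with $\varphi$-transform-type coefficients of $f$. Because $\{\widetilde{\mathbf{\Phi}},\widetilde{\mathbf{\Psi}}\}$ is a semi-orthogonal Riesz basis of $L_2(\mathbb{R})$ whose biorthogonal dual consists of finite linear combinations of translates and dyadic dilates of the orthonormal Battle-Lemari\'{e} $\phi_{\alpha,\boldsymbol{k}}$, $\psi_{\alpha,\boldsymbol{k},\boldsymbol{s}}$, the dual system inherits the same smoothness, localisation and vanishing-moment profile; the third bound $\alpha\ge[\sigma_p(w)-s]+1$ in \eqref{condB} is precisely what is needed to make those dual functions qualify as smooth atoms in the Muckenhoupt-weighted setting. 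This gives $\|\lambda\|_{b_{pq}^s(w)}\lesssim\|f\|_{B_{pq}^s(\mathbb{R},w)}$. Uniqueness of the representation and the linear-isomorphism statement $I\colon f\mapsto\{\lambda_{d\tau}\}$ then follow from the $L_2(\mathbb{R})$ Riesz-basis property extended to $\mathscr{S}'(\mathbb{R})$ by density and the pairings \eqref{asty}.

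The main obstacle is the $\sigma_p(w)$-bound in the analysis half. Reducing it to a usable form requires an almost-diagonal matrix estimate for the change-of-basis operator between $\{\phi_{\alpha,\boldsymbol{k}},\psi_{\alpha,\boldsymbol{k},\boldsymbol{s}}\}$ and $\{\widetilde{\mathbf{\Phi}},\widetilde{\mathbf{\Psi}}\}$ acting on the weighted sequence space $b_{pq}^s(w)$; the quantity $\sigma_p(w)=r_w/\min\{p,r_w\}-2+r_w$ encodes exactly the off-diagonal decay threshold needed for such an operator to be bounded on a Muckenhoupt-weighted $\ell^{pq}_s$-type space. Making the three separate thresholds in \eqref{condB} simultaneously compatible with the molecule conditions (M.i)--(M.iii) of Theorem \ref{T0} and with the analogous atom conditions — and confirming that the natural-order B-spline smoothness $C^{\alpha-1}$ is enough to meet both — is the technical heart of the proof.
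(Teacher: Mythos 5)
The paper itself offers no proof of Theorem \ref{main'}: it is imported from \cite[\S\,4.3]{PSI} (see also \cite[Theorem 4.7]{RMC}), so there is no in-paper argument to compare yours against line by line. That said, your outline --- synthesis via the molecular Theorem \ref{T0} after checking (M1)--(M4) for the compactly supported localised Battle--Lemari\'{e} system, analysis via the atomic machinery of the cited references, with \eqref{condB} supplying the smoothness, moment and $\sigma_p(w)$ thresholds --- is the strategy of those references, and the individual verifications you sketch (the spline $B_{2\alpha+1}^{(\alpha+1)}$ being $C^{\alpha-1}$ with Lipschitz top derivative, which forces $\alpha\ge[s]+1$; vanishing moments read off from the factor $(\mathrm{e}^{i\omega/2}-1)^{\alpha+1}$; compact support trivialising the polynomial decay factors) are sound.

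Two points need repair. The minor one: the dual Riesz basis of $\{\widetilde{\mathbf{\Phi}}(\cdot-\tau)\}_\tau$ in $V_0$ is \emph{not} a finite linear combination of translates of the orthonormal $\phi_{\alpha,\boldsymbol{k}}$. Since $\mathbf{\Phi}_{\alpha,\boldsymbol{k}}$ arises from the orthonormal system through the trigonometric polynomial $\mathbf{A}_\alpha(\omega)$, the dual arises through the inverse filter, i.e.\ the infinite exponentially decaying series \eqref{ryadok}; the dual functions have unbounded support. Exponential decay still beats any polynomial rate $M$, so the argument survives, but your claim that the dual inherits the same localisation must be weakened and its tails absorbed by the almost-diagonal estimate you allude to. The substantive gap is that you never establish the exact reproducing identity underlying \eqref{function}--\eqref{asty}: for a semi-orthogonal but non-orthonormal Riesz basis, $\sum_\tau\langle f,\widetilde{\mathbf{\Phi}}_\tau\rangle\widetilde{\mathbf{\Phi}}_\tau+\sum_{\nu,\tau}\langle f,\widetilde{\mathbf{\Psi}}_{(\nu-1)\tau}\rangle\widetilde{\mathbf{\Psi}}_{(\nu-1)\tau}$ is not automatically $f$; the composition of your analysis and synthesis operators is the identity only when the coefficients are taken against the dual system, whereas \eqref{asty} pairs $f$ with the system itself. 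Your two one-sided norm estimates therefore do not by themselves yield the "if and only if", the uniqueness, or the isomorphism claim: you must additionally prove that the specific expansion with the self-paired coefficients converges to $f$ in $\mathscr{S}'(\mathbb{R})$ (in \cite{PSI} and \cite{RMC} this is done by first verifying the identity on $L_2(\mathbb{R})$ via the frame relations of the semi-orthogonal spline system and then extending by the boundedness of both operators), and uniqueness rests on that step rather than on a bare appeal to density.
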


\smallskip For fractional $\alpha>0$ our results in \S\,5 will be based on the following interpretation of Theorem \ref{T0}.
\begin{theorem}\label{FrDec} 
Let $s\in\mathbb{R}$, $1\le p<\infty$, $0<q\le\infty$ and $w\in\mathscr{A}_\infty$. Suppose $\widetilde{\mathbf{\Phi}},\,\widetilde {\mathbf{\Psi}}$ 
are defined by \eqref{vazhnoo} for $\alpha\not\in\mathbb{N}$ and form a family of smooth molecules for ${B}_{pq}^{s}(\mathbb{R},w)$ and 
$f\in\mathscr{S}'(\mathbb{R})$. 
The distribution in the right hand side of \eqref{function} belongs to ${B}_{pq}^{s}(\mathbb{R},w)$
if $\|{\lambda}\|_{{b}_{pq}^{s}(w)}<\infty$ with $\lambda=\{\lambda_{\nu\tau}\}$ of the form \eqref{asty}. Besides,
\begin{equation*}\|f\|_{B_{pq}^{s}(\mathbb{R},w)}\lesssim \|{\lambda}\|_{{b}_{pq}^{s}(w)}.\end{equation*}
\end{theorem}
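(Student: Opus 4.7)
The plan is to deduce Theorem \ref{FrDec} as a direct specialisation of Theorem \ref{T0} applied to the fractional spline wavelet family $\{\widetilde{\mathbf{\Phi}},\widetilde{\mathbf{\Psi}}\}$ from \eqref{vazhnoo}. All the heavy lifting---checking that these functions, after being rescaled by the constants $c_0$ and $c$ built into \eqref{vazhnoo}, satisfy the molecular estimates (M1)--(M4), (M2*)--(M4*) together with the order conditions (M.i)--(M.iii) for the given range of parameters $p,q,s,w$---has already been carried out at the end of Section \ref{FR} through the detailed pointwise analysis of $\beta_\pm^\alpha$ and $\mathit{\Psi}_\pm^\alpha$, the vanishing-moment argument for $\psi_\pm^\alpha$, and the choice $0<M\le\alpha+1-[s]$ (resp.\ $\alpha+2+s$). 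Under the standing hypothesis of Theorem \ref{FrDec}, we may therefore take this family as the input of Theorem \ref{T0}.

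The next step is to identify coefficients. Set $m_{Q_{0\tau}}:=\widetilde{\mathbf{\Phi}}_\tau$ for $\tau\in\mathbb{Z}$, and for $\nu\ge 1$ let $m_{Q_{\nu\tau}}$ be the level-$\nu$ dilate--translate of $\widetilde{\mathbf{\Psi}}$ obtained from \eqref{ForRepr'}. Then the sequence prescribed by \eqref{asty} reads $\lambda_{0\tau}=\langle f,m_{Q_{0\tau}}\rangle$ and $\lambda_{\nu\tau}\simeq 2^{\nu/2}\langle f,m_{Q_{\nu\tau}}\rangle$, which is precisely the sequence that Theorem \ref{T0} pairs with $\{m_{Q_{\nu\tau}}\}$. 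Inserting this identification into the reconstruction sum $\sum_{\nu,\tau}\langle f,m_{Q_{\nu\tau}}\rangle m_{Q_{\nu\tau}}(x)$ of Theorem \ref{T0} reproduces, up to a universal multiplicative constant coming from the index shift between $\widetilde{\mathbf{\Psi}}_{(\nu-1)\tau}$ in \eqref{function} and the level-$\nu$ molecule $m_{Q_{\nu\tau}}$, the right-hand side of \eqref{function}.

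With molecules and coefficients matched, the conclusion of Theorem \ref{T0}---that the series belongs to ${B}_{pq}^{s}(\mathbb{R},w)$ whenever $\|\lambda\|_{b_{pq}^{s}(w)}<\infty$, together with the estimate $\|f\|_{B_{pq}^s(\mathbb{R},w)}\lesssim\|\lambda\|_{b_{pq}^s(w)}$---transfers verbatim to the setting of Theorem \ref{FrDec}. I do not expect a genuine obstacle here: the only point needing care is the bookkeeping between the two indexing conventions (the $\nu$-th tier of the wavelet sum in \eqref{function} is built from the scale-$(\nu-1)$ function $\widetilde{\mathbf{\Psi}}_{(\nu-1)\tau}$, whereas Theorem \ref{T0} speaks of a scale-$\nu$ molecule $m_{Q_{\nu\tau}}$); since this is a fixed shift, it is absorbed into the implicit constant of the $\lesssim$ and demands no additional estimate. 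Hence the proof amounts essentially to an unpacking of Theorem \ref{T0} in the language of the spline wavelet decomposition \eqref{function}.
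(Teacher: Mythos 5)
Your proposal is correct and is essentially the paper's own route: the paper presents Theorem \ref{FrDec} precisely as an ``interpretation'' of Theorem \ref{T0}, with the molecule property of $\{\widetilde{\mathbf{\Phi}},\widetilde{\mathbf{\Psi}}\}$ taken as a hypothesis (and verified separately in \S\,\ref{FR}), so the argument reduces to matching the coefficients \eqref{asty} with $\{2^{\nu/2}\langle f,m_{Q_{\nu\tau}}\rangle\}$ and absorbing the one-level index shift between $\widetilde{\mathbf{\Psi}}_{(\nu-1)\tau}$ and the level-$\nu$ molecule into the implicit constant, exactly as you do.
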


\section{Norm related inequalities}\label{MR}

Let us begin with examples.
\begin{example}\label{Exam1}
Let $1<p,q<\infty$. Assume $u,w\in\mathscr{A}_\infty$ with $r_u=r_w=1$. Suppose $f\in L_1^\textrm{loc}(\mathbb{R})$ and $f(y)\equiv 0$ if $y\in(-\infty,0)$. Define 
\begin{equation*}
I_{0^+}^{1/3} f(x):=\int_{0}^{x}(x-y)^{-2/3}{f(y)}\,dy\qquad (x>0).
\end{equation*} We show first the validity of the inequality
\begin{equation}\label{ineqEx}\|I_{0^+}^{1/3}f\|_{{B}_{pq}^{0}(\mathbb{R},w)}\lesssim C_{0^+}^{1/3}\|f\|_{{B}_{pq}^{1/3}(\mathbb{R},u)}\quad\textrm{with}\ \ 
C_{0^+}^{1/3}:=\mathscr{M}_{0^+}^{1/3}+\sup_{d\in\mathbb{N}_0}\mathscr{N}_{0^+}^{1/3}({d+1}),\end{equation}
 where \begin{align}\mathscr{M}_{0^+}^{1/3}:=&\sup_{\tau\in \mathbb{Z}^+}\biggl(\sum_{r\ge\tau}(r-\tau+1)^{-2p/3}\int_{Q_{0r}}w\biggr)^{\frac{1}{p}}
\biggl(\sum_{0\le r\le\tau} \Bigl(\int_{Q_{0r}}\bar{u}\Bigr)^{1-p'}\biggr)^{\frac{1}{p'}}<\infty\nonumber\\
&+\sup_{\tau\in \mathbb{Z}^+}\biggl(\sum_{r\ge\tau}\int_{Q_{0r}}w\biggr)^{\frac{1}{p}}\biggl(\sum_{0\le r\le\tau}(\tau-r+1)^{-2p'/3} 
\Bigl(\int_{Q_{0r}}\bar{u}\Bigr)^{1-p'}\biggr)^{\frac{1}{p'}},\label{Md}\\\mathscr{N}_{0^+}^{1/3}(d):=
&\frac{1}{2^{2d/3}}\Biggl[\sup_{\tau\in \mathbb{Z}^+}\biggl(\sum_{r\ge\tau}(r-\tau+1)^{-p/3}\int_{Q_{dr}}w\biggr)^{\frac{1}{p}}\biggl(\sum_{0\le r\le\tau} \Bigl(\int_{Q_{dr}}
\tilde{u}\Bigr)^{1-p'}\biggr)^{\frac{1}{p'}}\nonumber\\&+\sup_{\tau\in \mathbb{Z}^+}\biggl(\sum_{r\ge\tau}\int_{Q_{dr}}w\biggr)^{\frac{1}{p}}\biggl(\sum_{0\le r\le\tau} 
(\tau-r+1)^{-p'/3}\Bigl(\int_{Q_{dr}}\tilde{u}\Bigr)^{1-p'}\biggr)^{\frac{1}{p'}}\Biggr]<\infty\label{Nd}\end{align} with $\tau\in\mathbb{Z}$ and $Q_{dr}=\Bigl[\frac{r-1/2}{2^d},\frac{r+1/2}{2^d}\Bigr]$. Here $\bar{u}\le u$ and $\tilde{u}\le u$ are weight functions.

Observe that $J=1$ for the target space ${B}_{pq}^{0}(\mathbb{R},w)$. Therefore, we can choose $M=2$ and put $N=0$. 
To evaluate from above the left hand side norm in the inequality \eqref{ineqEx} we will use fractional order scaling function $\beta_-^{1+2/3,0}=:\beta_-^{1+2/3}$ 
and related to it wavelet $\mathit{\Psi}_-^{1+2/3}$. On the strength of Theorem \ref{FrDec},
\begin{equation}\label{rysha}\|I_{0^+}^{1/3}f\|_{B^{0}_{pq}(\mathbb{R},w)}\lesssim\|{\lambda}\|_{{b}_{pq}^{0}(w)}= 
\biggl\|\sum_{\nu\in\mathbb{N}_0}\Bigl\|\sum_{\tau\in\mathbb{Z}}
|\lambda_{\nu\tau}|\chi_{Q_{\nu\tau}}\Bigr\|_{L^p(\mathbb{R},w)}^q\biggr\|^{1/q},\end{equation} where (see \eqref{vazhnoo} and \eqref{ForRepr'} with 
$\beta_-^{1+2/3}$ and $\mathit{\Psi}_-^{1+2/3}$)
$$\lambda_{0\tau}=\langle I_{0^+}^{1/3}f,\widetilde{\mathbf{\Phi}}_{\tau}\rangle\quad(\tau\in\mathbb{Z}),\qquad   
\lambda_{\nu\tau}=2^{\nu/2}\langle I_{0^+}^{1/3}f,\widetilde{\mathbf{\Psi}}_{\nu\tau}\rangle\quad(\nu\in\mathbb{N},\,\tau\in\mathbb{Z}).$$ 
Notice that $\langle I_{0^+}^{1/3}f,\widetilde{\mathbf{\Phi}}_{\tau}\rangle=0$ for $\tau\le 0$. For $\tau>0$ we write 
\begin{align*}c_0^{-1}\langle I_{0^+}^{1/3}f,\widetilde{\mathbf{\Phi}}_{\tau}\rangle=
&\int_{\mathbb{R}}\biggl( \int_{0}^{x}\frac{{f(y)}\,dy}{(x-y)^{2/3}}\biggr) 
\beta_-^{1+2/3}(x-\tau)\,dx\\ =&\frac{1}{\Gamma(2+2/3)}\sum_{k\ge 0}(-1)^k\binom{2+2/3}{k}\int_{\mathbb{R}}\biggl( \int_{0}^{x}\frac{{f(y)}\,dy}{(x-y)^{2/3}}\biggr) 
(-x+\tau-k)_+^{1+2/3}\,dx\\=&\frac{1}{\Gamma(2+2/3)}\sum_{k\ge 0}(-1)^k\binom{2+2/3}{k}\int_{0}^{\tau-k}\biggl( \int_{0}^{x}\frac{{f(y)}\,dy}{(x-y)^{2/3}}\biggr) 
(-x+\tau-k)^{1+2/3}\,dx\\=&\frac{1}{\Gamma(2+2/3)}\sum_{k\ge 0}(-1)^k\binom{2+2/3}{k}\int_{0}^{\tau-k}{f(y)}\biggl( \int_{y}^{\tau-k}\frac{(-x+\tau-k)^{1+2/3}\,dx}
{(x-y)^{2/3}}\biggr)dy.\end{align*} 
Since $(-x+\tau-k)^{1+2/3}=(1+2/3)\int_x^{\tau-k}(z-x)^{2/3}\,dz$, then, by the substitution $x=y+s(z-y)$, 
\begin{multline}\label{subst}\frac{3}{5}\int_{y}^{\tau-k}\frac{(-x+\tau-k)^{1+2/3}\,dx}{(x-y)^{2/3}}=\int_{y}^{\tau-k}\biggl(\int_y^{z}\frac{(z-x)^{2/3}\,dx}{(x-y)^{2/3}}\biggr)dz
\\=\int_{y}^{\tau-k}(z-y)\,dz\int_0^1s^{-2/3}(1-s)^{2/3}\,ds=B\Bigl(\frac{1}{3},\frac{5}{3}\Bigr)\int_{y}^{\tau-k}(z-y)\,dz=\frac{1}{2}B\Bigl(\frac{1}{3},\frac{5}{3}\Bigr)
(-y+\tau-k)^2,
\end{multline} where $B\bigl(1/3,5/3\bigr)$ is the Beta--function. We have 
\begin{equation}\label{05_01}\langle I_{0^+}^{1/3}f,\widetilde{\mathbf{\Phi}}_{\tau}\rangle=B\Bigl(\frac{1}{3},
\frac{5}{3}\Bigr)\frac{5 c_0}{6\Gamma(2+2/3)}\sum_{k\ge 0}(-1)^k\binom{2+2/3}{k}\int_{0}^{\tau-k}{f(y)}(-y+\tau-k)^2\,dy.\end{equation} 
Our aim now is to reduce the integral in the right hand side of \eqref{05_01} to $\bigl\langle f(\cdot)B_2(\cdot-\tau+k+3)\bigr\rangle$, where $B_2(y-\tau+k+3)=-\Gamma(3)\beta^{2,\tau-k-3}(y)$ is the B-spline of the second order.  
In order to do this we shall use the Chu--Vandermonde identity (see e.g. \cite[p.15]{SKM}, \cite[pp. 59--60]{Ask} or \cite{Van}):
\begin{equation}\label{ChVan}
\binom{r+s}{k}=\sum_{n=0}^k\binom{r}{n}\binom{s}{k-n},\qquad \textrm{where}\quad r,s\in\mathbb{R}\quad \textrm{and}\quad k\in\mathbb{N}.\end{equation} \label{Page1} On the strength of \eqref{ChVan} with $r=-1/3$ and $s=3$, 
\begin{align*}&\sum_{k\ge 0}(-1)^k\binom{2+2/3}{k}\int_{0}^{\tau-k}{f(y)}(-y+\tau-k)^2\,dy\\
=&\sum_{k\ge 0}(-1)^k\sum_{n=0}^k\binom{-1/3}{n}\binom{3}{k-n}\int_{0}^{\infty}{f(y)}(-y+\tau-k)^2_+\,dy\\=&
\sum_{n\ge 0}\binom{-1/3}{n}\sum_{k=n}^\infty(-1)^k\binom{3}{k-n}\int_{0}^{\infty}{f(y)}(-y+\tau-k)^2_+\,dy
\\=&
\sum_{n\ge 0}(-1)^n\binom{-1/3}{n}\sum_{m\ge 0}(-1)^m\binom{3}{m}\int_{0}^{\infty}{f(y)}(-y+\tau-n-m)^2_+\,dy\\
=&-
\sum_{n=0}^\tau(-1)^n\binom{-1/3}{n}\int_{0}^{\infty}{f(y)}B_2(y-\tau+n+3)^2\,dy,\end{align*}
in view of $\int_0^\infty f(y)B_2(y-\tau+n+3)\,dy=0$ if $n>\tau$ (since $f\equiv 0$ on $\mathbb{R}_-$).
It is known that 
\begin{equation}\label{BinCoef}
(-1)^k\binom{z}{k}=\binom{k-z-1}{k}=\frac{1}{\Gamma(-z)}\frac{1}{(k+1)^{z+1}}\prod_{j=k+1}^\infty\frac{(1+1/j)^{-z-1}}{1-\frac{z+1}{j}}.
\end{equation} Thus, absolute values of $\binom{-1/3}{k}$ can be evaluated from above by ${(k+1)^{-2/3}}$ with some constant. 
And the term for $\nu=0$ in the right hand side of \eqref{rysha} can be estimated as follows, by taking into account \eqref{05_01}:
\begin{align*}
\sum_{\tau\ge 0}\Bigl(\int_{Q_{0\tau}}w\Bigr)^p\bigl|\langle I_{0^+}^{1/3}f,\widetilde{\mathbf{\Phi}}_{\tau}\rangle\bigr|^p
\lesssim& \sum_{\tau\ge 0}\Bigl(\int_{Q_{0\tau}}w\Bigr)^p\biggl[\sum_{k=0}^\tau (k+1)^{-2/3}\Bigl|\int_{0}^{\tau-k}{f(y)}B_2(y-\tau+3+k)\,dy\Bigr|
\biggr]^p\nonumber\\
=&\sum_{\tau\ge 0}\Bigl(\int_{Q_{0\tau}}w\Bigr)^p\biggl[\sum_{n=0}^\tau (\tau-n+1)^{-2/3}\Bigl|\int_{0}^{n}{f(y)}B_2(y-n+3)\,dy\Bigr|\biggr]^p.
\end{align*} Further, on the strength of \cite[Theorem 1.8]{PSU} and in view of \eqref{Md},
\begin{equation}\label{06_07_03}\biggl(\sum_{\tau\ge 0}\Bigl(\int_{Q_{0\tau}}w\Bigr)^p\bigl|\langle I_{0^+}^{1/3}f,\widetilde{\mathbf{\Phi}}_{\tau}\rangle\bigr|^p
\biggr)^{1/p}
\lesssim \mathscr{M}^{1/3}_{0^+}\biggl(\sum_{\tau\ge 0}\Bigl(\int_{Q_{0\tau}}u\Bigr)^p\biggl|\int_{0}^{\tau}{f(y)}B_2(y-\tau+3)\,dy\biggr|^p\biggr)^{1/p}.
\end{equation} We will denote $\int_{0}^{\tau}{f(y)}B_2(y-\tau+3)\,dy=:\langle f,{\mathbf{\Phi}}_{2,\tau-3}\rangle$ in \eqref{06_07_03} for starting to formulate a new basis generated by
the second order scaling ${\mathbf{\Phi}}_{2,-3}/\mathbf{\Lambda}'_2$ and wavelet ${\mathbf{\Psi}}_{2,-3,-6}/\mathbf{\Lambda}^{''}_2$ functions.

To estimate $\lambda_{\nu\tau}$ for $\nu\not=0$ in the right hand side of 
\eqref{rysha} from above by $\langle f,{\mathbf{\Psi}}_{2,-3,-6}/\mathbf{\Lambda}^{''}_2\rangle$ we write, by making use of \eqref{WFdef'},
\begin{align}\label{Proof1}&
2^{5/3}{\Gamma(8/3)}(\langle I_{0^+}^{1/3}f,\psi^{5/3}_-(2^{\nu-1}\cdot-\tau)\rangle\nonumber\\=&{\Gamma(8/3)}
\sum_{m\in\mathbb{Z}}(-1)^m\beta_\ast^{13/3}(m-1)\sum_{n\ge 0}(-1)^n\binom{8/3}{n}\int_{\mathbb{R}}\biggl( \int_{0}^{x}\frac{{f(y)}\,dy}{(x-y)^{2/3}}\biggr) 
\beta_-^{5/3}(2^\nu x-m+n-\tau)\,dx\nonumber\\=&\!\sum_{m\in\mathbb{Z}}(-1)^m\beta_\ast^{13/3}(m-1)\sum_{n\ge 0}(-1)^n\binom{8/3}{n}\nonumber\\
&\times\sum_{l\ge 0}(-1)^l\binom{8/3}{l}\int_{\mathbb{R}}\biggl( \int_{0}^{x}\frac{{f(y)}\,dy}{(x-y)^{2/3}}\biggr) 
(-2^\nu x+m-n-l+\tau)_+^{5/3}\,dx,\end{align}
where, similarly to the case $\nu=0$ (see \eqref{subst}), and under the assumption that $\tau+m>0$,
\begin{align}\label{09_07_01}&\int_{\mathbb{R}}\biggl( \int_{0}^{x}\frac{{f(y)}\,dy}{(x-y)^{2/3}}\biggr) 
(-2^\nu x+m-n-l+\tau)_+^{5/3}\,dx\nonumber\\&=
\int_{0}^{\frac{\tau+m-l-n}{2^\nu}}\biggl( \int_{0}^{x}\frac{{f(y)}\,dy}{(x-y)^{2/3}}\biggr) 
(-2^\nu x+m-n-l+\tau)^{5/3}\,dx\nonumber\\&=
{2^{-\nu}}\int_{0}^{{\tau+m-l-n}}\biggl( \int_{0}^{2^{-\nu}z}\frac{{f(y)}\,dy}{(2^{-\nu}z-y)^{2/3}}\biggr) 
(-z+m-n-l+\tau)^{5/3}\,dz\nonumber\\&={2^{-\frac{4\nu}{3}}}\int_{0}^{{\tau+m-l-n}}\biggl( \int_{0}^{z}\frac{{f(2^{-\nu}t)}\,dt}{(z-t)^{2/3}}\biggr) 
(-z+m-n-l+\tau)^{5/3}\,dz\nonumber\\&={2^{-\frac{4\nu}{3}}}\int_{0}^{{\tau+m-l-n}}{f(2^{-\nu}t)}
\biggl( \int_{t}^{{\tau+m-l-n}}\frac{(-z+m-n-l+\tau)^{5/3}\,dz}
{(z-t)^{2/3}}\biggr)dt\nonumber\\&\simeq
{2^{-\frac{4\nu}{3}}}\int_{0}^{{\tau+m-l-n}}{f(2^{-\nu}t)}\biggl( \int_{t}^{{\tau+m-l-n}}(r-t)\,dr\biggr)dt
\nonumber\\&\simeq{2^{-\frac{4\nu}{3}}}\int_{0}^{{\tau+m-l-n}}{f(2^{-\nu}t)}(-t+\tau+m-l-n)^2\,dt\nonumber\\
&={2^{-\nu/3}}\int_{0}^{\frac{\tau+m-l-n}{2^{\nu}}}{f(y)}(-2^\nu y+\tau+m-l-n)^2\,dy.\end{align} 
We have
\begin{multline*}2^{5/3}{2^{\nu/3}}{\Gamma(2+2/3)}(\langle I_{0^+}^{1/3}f,\psi^{5/3}_-(2^{\nu-1}\cdot-\tau)\rangle\\ \simeq
\sum_{m\in\mathbb{Z}}(-1)^m\beta_\ast^{13/3}(m-1)\sum_{n\ge 0}(-1)^n\binom{8/3}{n}
\sum_{l\ge 0}(-1)^l\binom{8/3}{l}\int_{0}^{\frac{\tau+m-l-n}{2^{\nu}}}{f(y)}(-2^\nu y+\tau+m-l-n)^2\,dy.\end{multline*}
We need to pass now from $\sum_{n\ge 0}(-1)^n\binom{8/3}{n}
\sum_{l\ge 0}(-1)^l\binom{8/3}{l}(-2^\nu y+\tau+m-l-n)^2_+$ to
$$\sum_{n\ge 0}(-1)^n\binom{3}{n}
\sum_{l\ge 0}(-1)^l\binom{3}{l}(-2^\nu y+\tau+m-l-n)^2_+=-4\sum_{n\ge 0}(-1)^n\binom{3}{n}B_2(2^\nu y-\tau-m+n+3).$$ To this end  we write, by using \eqref{ChVan} with $r=s=8/3$,
\begin{align*}&\sum_{n\ge 0}(-1)^n\binom{8/3}{n}
\sum_{l\ge 0}(-1)^l\binom{8/3}{l}(-2^\nu y+\tau+m-l-n)^2_+\\&=
\sum_{n\ge 0}\binom{8/3}{n}
\sum_{k\ge n}(-1)^k\binom{8/3}{k-n}(-2^\nu y+\tau+m-k)^2_+\\&=
\sum_{k\ge 0}(-1)^k(-2^\nu y+\tau+m-k)^2_+
\sum_{k\ge n}\binom{8/3}{n}\binom{8/3}{k-n}=
\sum_{k\ge 0}(-1)^k\binom{16/3}{k}(-2^\nu y+\tau+m-k)^2_+.\end{align*} 
Observe that $\sum_{n\ge 0}(-1)^n\binom{3}{n}B_2(2^\nu y-\tau-m+n+3)$ can be written through iterated differences as
$$\sum_{n\ge 0}(-1)^n\binom{3}{n}B_2(2^\nu y-\tau-m+n+3)=\Delta_{2^{-\nu}}^3B_2(2^\nu y-\tau-m+6)=\Delta_{2^{-\nu}}^6 (-2^\nu y+\tau+m-6)^2_+.$$
Analogously to how that was done for the case $\nu=0$ (see p. \pageref{Page1}), we obtain for $\tau+m>0$
$$\sum_{k\ge 0}(-1)^k\binom{16/3}{k}(-2^\nu y+\tau+m-k)^2_+=4\sum_{k=0}^{\tau+m}(-1)^{k+1}\binom{-2/3}{k}
\Delta_{2^{-\nu}}^3B_2(2^\nu y-\tau-m+k+6)$$ by virtue of \eqref{ChVan} with $s=6$ and $r=-2/3$. 
From here, by using \eqref{BinCoef} with $z=-2/3$ and taking into account \eqref{BigPsi} and \eqref{09_07_01}, we arrive to the estimate
\begin{align}\label{12_07_01}
\sum_{\tau\in\mathbb{Z}}\Bigl(\int_{Q_{\nu\tau}}w\Bigr)^p\bigl|\langle I_{0^+}^{1/3}f,\widetilde{\mathbf{\Psi}}_{(\nu-1)\tau}\rangle\bigr|^p
\lesssim& 2^{-p\nu/3}\sum_{\tau\in\mathbb{Z}}\Bigl(\int_{Q_{\nu\tau}}w\Bigr)^p\biggl[
\sum_{m\in\mathbb{Z}}\bigl|\beta_*^{13/3}(m-1)
\bigr|\sum_{k=0}^{\tau+m} (k+1)^{-1/3}\nonumber\\
&\times\biggl|\int_{0}^{\frac{\tau+m-k}{2^\nu}}{f(y)}\sum_{j=0}^2\frac{\lambda_j(2)}{2(-1)^j}
\Bigl[\Delta_{2^{-\nu}}^3B_2(2^\nu y-\tau-m+k+8+j)\nonumber\\&-\Delta_{2^{-\nu}}^3B_2(2^\nu y-\tau-m+k+8-j)\Bigr]\,dy\biggr|
\biggr]^p,\end{align} where the integral equals to 0 for $m<-\tau$ since $f\equiv 0$ on $\mathbb{R}_-$. Further, in view of \eqref{Muck} and \eqref{vanish},
\begin{equation}\label{12_07_02}\Bigl(\int_{Q_{\nu\tau}}w\Bigr)\bigl|\beta_*^{13/3}(m-1)\bigr|\lesssim \Bigl(\int_{Q_{\nu|m|}}w\Bigr)\bigl(1+|m|\bigr)^{-3-2/3}\lesssim
\Bigl(\int_{Q_{\nu(\tau+m)}}w\Bigr)\bigl(1+|m|\bigr)^{-2-2/3},\end{equation}
where $Q_{\nu|m|}=\Bigl[\min\Bigl\{\frac{\tau}{2^\nu},\frac{\tau+m+1}{2^\nu}\Bigr\},\max\Bigl\{\frac{\tau}{2^\nu},\frac{\tau+m+1}{2^\nu}\Bigr\}\Bigr]$. Denote 
$${\mathbf{\Psi}}_{2,-3,-6}(2^{\nu-1}\cdot-\tau-m+k)=\sum_{j=0}^2\frac{\lambda_j(2)}{2(-1)^j}
\Bigl[\Delta_{2^{-\nu}}^3B_2(2^\nu \cdot-\tau-m+k+8+j)-\Delta_{2^{-\nu}}^3B_2(2^\nu \cdot-\tau-m+k+8-j)\Bigr]$$ by $\bar{\mathbf{\Psi}}_{(\nu-1)(\tau+m-k)}(\cdot)$ 
and continue the estimate \eqref{12_07_01}, by taking into account \eqref{12_07_02}, as follows: 
\begin{align*}
&\sum_{\tau\in\mathbb{Z}}\Bigl(\int_{Q_{\nu\tau}}w\Bigr)^p\bigl|\langle I_{0^+}^{1/3}f,\widetilde{\mathbf{\Psi}}_{(\nu-1)\tau}\rangle\bigr|^p\\&
\lesssim 2^{-p\nu/3}\sum_{\tau\in\mathbb{Z}}\Bigl(\int_{Q_{\nu\tau}}w\Bigr)^p\biggl[
\sum_{m\ge-\tau}\bigl|\beta_*^{13/3}(m-1)
\bigr|\sum_{k=0}^{\tau+m} (k+1)^{-1/3}\biggl|\int_{0}^{\tau+m-k}{f(y)}
\bar{\mathbf{\Psi}}_{(\nu-1)(\tau+m-k)}(y)\,dy\biggr|
\biggr]^p\\&
\lesssim 2^{-p\nu/3}\sum_{\tau\in\mathbb{Z}}\biggl[
\sum_{m\ge-\tau}\bigl(1+|m|\bigr)^{-8/3}\Bigl(\int_{Q_{\nu(\tau+m)}}w\Bigr)
\sum_{k=0}^{\tau+m} (k+1)^{-1/3}\biggl|\int_{0}^{\tau+m-k}{f(y)}
\bar{\mathbf{\Psi}}_{(\nu-1)(\tau+m-k)}(y)\,dy\biggr|
\biggr]^p.\end{align*}
By H\'{o}lder's inequality,
\begin{multline*}
\sum_{m\ge-\tau}\bigl(1+|m|\bigr)^{-8/3}\Bigl(\int_{Q_{\nu(\tau+m)}}w\Bigr)
\sum_{k=0}^{\tau+m} (k+1)^{-1/3}\biggl|\int_{0}^{\tau+m-k}{f(y)}
\bar{\mathbf{\Psi}}_{(\nu-1)(\tau+m-k)}(y)\,dy\biggr|\\\le 
\Bigl(\sum_{m\in\mathbb{Z}}\bigl(1+|m|\bigr)^{-8p'/3}\Bigr)^{1/p'}\biggl(\sum_{m\ge-\tau}\bigl(1+|m|\bigr)^{-8p/3}\Bigl(\int_{Q_{\nu(\tau+m)}}w\Bigr)^p\\\times
\biggr[\sum_{k=0}^{\tau+m} (k+1)^{-1/3}\biggl|\int_{0}^{\tau+m-k}{f(y)}
\bar{\mathbf{\Psi}}_{(\nu-1)(\tau+m-k)}(y)\,dy\biggr|\biggr]^p\biggr)^{1/p}.
\end{multline*}
Thus,
\begin{align*}
&\sum_{\tau\in\mathbb{Z}}\Bigl(\int_{Q_{\nu\tau}}w\Bigr)^p\bigl|\langle I_{0^+}^{1/3}f,\widetilde{\mathbf{\Psi}}_{(\nu-1)\tau}\rangle\bigr|^p\\&
\lesssim 2^{-p\nu/3}\sum_{\tau\in\mathbb{Z}}\sum_{m\ge-\tau}\bigl(1+|m|\bigr)^{-8p/3}
\Bigl(\int_{Q_{\nu(\tau+m)}}w\Bigr)^p
\biggl[\sum_{k=0}^{\tau+m} (k+1)^{-1/3}\biggl|\int_{0}^{\tau+m-k}{f(y)}
\bar{\mathbf{\Psi}}_{(\nu-1)(\tau+m-k)}(y)\,dy\biggr|
\biggr]^p\\&
\lesssim 2^{-p\nu/3}\sum_{m\in\mathbb{Z}}\bigl(1+|m|\bigr)^{-8p/3}\sum_{-m\le \tau}
\Bigl(\int_{Q_{\nu(\tau+m)}}w\Bigr)^p
\biggl[\sum_{k=0}^{\tau+m} (k+1)^{-1/3}\biggl|\int_{0}^{\tau+m-k}{f(y)}
\bar{\mathbf{\Psi}}_{(\nu-1)(\tau+m-k)}(y)\,dy\biggr|
\biggr]^p\\&
=2^{-p\nu/3}\sum_{m\in\mathbb{Z}}\bigl(1+|m|\bigr)^{-8p/3}\biggr)^{1/p}
\biggl(\sum_{n\ge 0}\Bigl(\int_{Q_{\nu n}}w\Bigr)^p\biggl[
\sum_{k=0}^{n} (k+1)^{-1/3}\biggl|\int_{0}^{n-k}{f(y)}
\bar{\mathbf{\Psi}}_{(\nu-1)(n-k)}(y)\,dy\biggr|
\biggr]^p\\&\lesssim2^{-p\nu/3}
\sum_{n\ge 0}\Bigl(\int_{Q_{\nu n}}w\Bigr)^p\biggl[
\sum_{m=0}^{n} (n-m+1)^{-1/3}\biggl|\int_{0}^{m}{f(y)}
\bar{\mathbf{\Psi}}_{(\nu-1)m}(y)\,dy\biggr|
\biggr]^p.\end{align*} 
On the strength of \cite[Theorem 1.8]{PSU} and in view of \eqref{Nd},
\begin{multline}\label{Proof2}
\biggl(\sum_{\tau\in\mathbb{Z}}\Bigl(\int_{Q_{\nu\tau}}w\Bigr)^p
\bigl|\langle I_{0^+}^{1/3}f,\widetilde{\mathbf{\Psi}}_{(\nu-1)\tau}\rangle\bigr|^p
\biggr)^{1/p}
\lesssim \mathscr{N}^{1/3}_{0^+}(\nu)2^{\nu/3}\biggl(\sum_{\tau\ge 0}\Bigl(\int_{Q_{\nu\tau}}u\Bigr)^p\biggl|\int_{0}^{\tau}{f(y)}\bar{\mathbf{\Psi}}_{(\nu-1)\tau}(y)\,dy\biggr|^p\biggr)^{1/p},
\end{multline} where $\bar{\mathbf{\Psi}}_{(\nu-1)m}$ in combination with ${\mathbf{\Phi}}_{2,\tau-3}$ forms the second order spline wavelet system, and 
$$\langle f,\bar{\mathbf{\Psi}}_{(\nu-1)\tau}\rangle/\mathbf{\Lambda}_2^{''}= 
\langle f(\cdot),{\mathbf{\Psi}}_{2,-3,-6}(2^{\nu-1}\cdot-\tau)\rangle/\mathbf{\Lambda}_2^{''}$$ with 
$\langle f(\cdot),{\mathbf{\Phi}}_{2,\tau-3}(\cdot)/\mathbf{\Lambda}'_2\rangle$ are the related to this system decomposing coefficients for $f$ in $B_{pq}^{1/3}(\mathbb{R},u)$. 
From here, on the strength of Theorem \ref{main'}, we approach the inequality \eqref{ineqEx}.\end{example}

For giving an idea how to perform a type of the reverse inequality for \eqref{left} we demonstrate the following
\begin{example}\label{Exam2}
Let $1<p,q<\infty$. Suppose $f\in L_1^\textrm{loc}(\mathbb{R})$ and $f(y)\equiv 0$ if $y\in(-\infty,0)$. Then
it holds 
\begin{equation}\label{ineqExR}\|f\|_{{B}_{pq}^{-1/3}(\mathbb{R})}\lesssim\|I_{0^+}^{1/3}f\|_{{B}_{pq}^{0}(\mathbb{R})}.\end{equation}

This time again $J=1$ for ${B}_{pq}^{-1/3}(\mathbb{R})$ and we choose $M=2$ and put $N=0$. By taking the fractional order scaling function 
$\beta_-^{4/3,0}=:\beta_-^{4/3}$ and the related wavelet $\mathit{\Psi}_-^{4/3}$ we write, making use of Theorem \ref{FrDec},
\begin{equation}\label{ryshaR}\|f\|_{B^{-1/3}_{pq}(\mathbb{R})}\lesssim\|{\lambda}\|_{{b}_{pq}^{-1/3}(w=1)}= 
\biggl\|\sum_{\nu\in\mathbb{N}_0}2^{-q\nu/3}\Bigl\|\sum_{\tau\in\mathbb{Z}}
|\lambda_{\nu\tau}|\chi_{Q_{\nu\tau}}\Bigr\|_{L^p(\mathbb{R})}^q\biggr\|^{1/q},\end{equation} where (see \eqref{vazhnoo} and \eqref{ForRepr'} with 
$\beta_-^{4/3}$ and $\mathit{\Psi}_-^{4/3}$)
$$\lambda_{0\tau}=\langle f,\widetilde{\mathbf{\Phi}}_{\tau}\rangle\quad(\tau\in\mathbb{Z}),\qquad   
\lambda_{\nu\tau}=2^{\nu/2}\langle f,\widetilde{\mathbf{\Psi}}_{\nu\tau}\rangle\quad(\nu\in\mathbb{N},\,\tau\in\mathbb{Z}).$$
As in the previous example, we start from $\nu=0$ in the right hand side of \eqref{ryshaR} and, by using the representation 
\begin{equation}\label{REPR}f(x)=\frac{1}{\Gamma(1-\alpha)\Gamma(\alpha)}\frac{d}{dx}
\int_{0}^x (x-y)^{-\alpha}\Bigl(\int_0^y f(t)(y-t)^{\alpha-1}dt\Bigr)dy\qquad(0<\alpha<1)\end{equation} (see \cite[\S\,2.3]{SKM}) with $\alpha=1/3$, 
we write for $\tau>0$:
\begin{align*}\frac{B(2/3,1/3)}{c_0}\langle f,\widetilde{\mathbf{\Phi}}_{\tau}\rangle=
\int_{\mathbb{R}}f(x) 
\beta_-^{4/3}(x-\tau)\,dx =\frac{B(2/3,1/3)}{\Gamma(7/3)}\sum_{k\ge 0}(-1)^k\binom{7/3}{k}\int_0^{\tau-k}f(x)
(-x+\tau-k)^{4/3}\,dx\\=\frac{4/3}{\Gamma(7/3)}\sum_{k\ge 0}(-1)^k\binom{7/3}{k}\int_0^{\tau-k}\biggl( \int_{0}^x (x-y)^{-1/3}\Bigl(\int_0^y f(t)(y-t)^{-2/3}dt\Bigr)dy\biggr) 
(-x+\tau-k)^{1/3}\,dx\\=\frac{4/3}{\Gamma(7/3)}\sum_{k\ge 0}(-1)^k\binom{7/3}{k}\int_0^{\tau-k}I_{0^+}^{1/3}f(y)\biggl(\int_y^{\tau-k}(-x+\tau-k)^{1/3}(x-y)^{-1/3}dx\biggr) 
\,dy.\end{align*} Since \begin{align}\label{1/3}\int_y^{\tau-k}(-x+\tau-k)^{1/3}(x-y)^{-1/3}dx=\frac{1}{3}\int_y^{\tau-k}\Bigl(\int_x^{\tau-k}(z-x)^{-2/3}dz\Bigr)(x-y)^{-1/3}dx\nonumber\\
=\frac{1}{3}\int_y^{\tau-k}\Bigl(\int_y^{z}(z-x)^{-2/3}(x-y)^{-1/3}dx\Bigr)dz=B(2/3,1/3)(-y+\tau-k),
\end{align} then
\begin{align*}{c_0}^{-1}\langle f,\widetilde{\mathbf{\Phi}}_{\tau}\rangle=
\frac{4/3}{\Gamma(7/3)}\sum_{k\ge 0}(-1)^k\binom{7/3}{k}\int_0^{\tau-k}I_{0^+}^{1/3}f(y)(-y+\tau-k) 
\,dy,\end{align*} which leads us, by manipulations analogous to those in Example \ref{Exam1}, to 
\begin{align*}{c_0}^{-1}\langle f,\widetilde{\mathbf{\Phi}}_{\tau}\rangle=
\frac{8/3}{\Gamma(7/3)}\sum_{k=0}^{\tau}(-1)^k\binom{1/3}{k}\int_0^\infty I_{0^+}^{1/3}f(y) B_1(-y+\tau-2-k)\,dy\\=:\frac{8/3}{\Gamma(7/3)}\sum_{k=0}^{\tau}(-1)^k\binom{1/3}{k}\langle I_{0^+}^{1/3}f,\mathbf{\Phi}_{1,\tau-2-k}\rangle.\end{align*} Further, by H\"{o}lders inequality and by virtue of \eqref{BinCoef},
\begin{align}\label{ppp}\sum_{\tau\ge 0}\bigl|\langle f,\widetilde{\mathbf{\Phi}}_{\tau}\rangle\bigr|^p\lesssim\sum_{\tau\ge 0}\biggl(\sum_{k=0}^\tau\Bigl|\binom{1/3}{k}\Bigr|\biggr)^{p-1}
\sum_{k=0}^{\tau}\Bigl|\binom{1/3}{k}\Bigr|\bigl|\langle I_{0^+}^{1/3}f,\mathbf{\Phi}_{1,\tau-2-k}
\rangle\bigr|^p\nonumber\\\lesssim
\sum_{l\ge 0}\Bigr|\bigl|\langle I_{0^+}^{1/3}f,\mathbf{\Phi}_{1,l-2}\rangle\bigr|^p\sum_{\tau\ge l}\Bigl|\binom{1/3}{\tau-l}\Bigr|\lesssim\sum_{l\ge 0}\Bigr|\bigl|\langle I_{0^+}^{1/3}f,\mathbf{\Phi}_{1,l-2}\rangle\bigr|^p.
\end{align} Observe that for an estimation of the same type in a weighted case one should have $r_w<\boldsymbol{\alpha}$. The point is that for jumping, by making use of \eqref{Muck}, from $\int_{Q_{0\tau}}w$ to $\int_{Q_{0(\tau-k)}}w$ it must be $\sum_{\tau\ge l}\Bigl|\binom{\boldsymbol{\alpha}}{\tau-l}\Bigr|(\tau-l)^{r_w}<\infty$.

For $\nu>0$ in the right hand side of \eqref{ryshaR} we obtain, by analogy to the case $\nu=0$, by using \eqref{ChVan},
\begin{align*}& 2^{4/3}{\Gamma(7/3)}(\langle f,\psi^{4/3}_-(2^{\nu-1}\cdot-\tau)\rangle\\&={\Gamma(7/3)}
\sum_{m\in\mathbb{Z}}(-1)^m\beta_\ast^{11/3}(m-1)\sum_{n\ge 0}(-1)^n\binom{7/3}{n}\int_{\mathbb{R}}f(x) 
\beta_-^{4/3}(2^\nu x-m+n-\tau)\,dx\\&=\!\!\sum_{m\in\mathbb{Z}}(-1)^m\beta_\ast^{11/3}(m-1)\!\sum_{n\ge 0}(-1)^n\binom{7/3}{n}
\!\sum_{l\ge 0}(-1)^l\binom{7/3}{l}\!\!\int_{\mathbb{R}}f(x) 
(-2^\nu x+m-n-l+\tau)_+^{4/3}\,dx\\&=\!\!\sum_{m\in\mathbb{Z}}(-1)^m\beta_\ast^{11/3}(m-1)\!\sum_{k\ge 0}(-1)^k\binom{14/3}{k}\int_{\mathbb{R}}f(x)(-2^\nu x+m-k+\tau)_+^{4/3}\,dx
,\end{align*}
where, in view of \eqref{REPR} and on the strength of \eqref{1/3} (see also \eqref{09_07_01}), 
\begin{align*}\int_{\mathbb{R}}f(x) 
(-2^\nu x+m-k+\tau)_+^{4/3}\,dx=&\int_0^{\frac{\tau+m-k}{2^\nu}}f(x) 
(-2^\nu x+m-k+\tau)^{4/3}\,dx\\=&4\cdot2^{\nu}/3\int_0^{\frac{\tau+m-k}{2^\nu}} I_{0^+}^{1/3}f(y)
(-2^\nu y+m-k+\tau)\,dy.\end{align*} 

To obtain a wavelet function $\mathbf{\Psi}_{1,-2,-4}$ of the first order related to $\mathbf{\Phi}_{1,-2}$ (see \eqref{ppp}), we need to form
$\sum_{n\ge 0}(-1)^n\binom{2}{n}B_1(2^\nu y-\tau-m+n+2)$, which can be written through iterated differences as
$$\Delta_{2^{-\nu}}^2B_1(2^\nu y-\tau-m+4)=\Delta_{2^{-\nu}}^4 (-2^\nu y+\tau+m-4)^1_+.$$ To do this, as before, we use \eqref{ChVan} with $s=4$ and $r=2/3$, and arrive to
$$\sum_{k\ge 0}(-1)^k\binom{14/3}{k}(-2^\nu x+\tau+m-k)_+=2\sum_{k=0}^{\tau+m}(-1)^{k+1}\binom{2/3}{k}
\Delta_{2^{-\nu}}^2B_1(2^\nu x-\tau-m+k+4).$$ Thus, by summarising the above assessments, and by making an estimate similar to \eqref{12_07_01} (with $w=1$ and $f$ 
instead of $I_{0^+}^{1/3}$ and vice versa), we come to
\begin{align*}
\biggl(\sum_{\tau\in\mathbb{Z}}\bigl|\langle f,\widetilde{\mathbf{\Psi}}_{(\nu-1)\tau}\rangle\bigr|^p
\biggr)^{1/p}
\lesssim & 2^{\nu/3}\biggl(\sum_{\tau\in\mathbb{Z}}\biggl[
\sum_{m\in\mathbb{Z}}\bigl|\beta_*^{11/3}(m-1)
\bigr|\sum_{k=0}^{\tau+m} \binom{2/3}{k}\\&\times\biggl|\int_{0}^{\frac{\tau+m-k}{2^\nu}}{I_{0^+}^{1/3}f(x)}\mathbf{\Psi}_{1,-2,-4}(2^{\nu-1} x-\tau-m+k)\,dy\biggr|
\biggr]^p\biggr)^{1/p}\end{align*}
with
$$\mathbf{\Psi}_{1,-2,-4}(2^{\nu-1} x-\tau-m+k):=\sum_{j=0}^1\frac{\lambda_j(1)}{2(-1)^j}
\Bigl[\Delta_{2^{-\nu}}^2B_1(2^\nu x-\tau-m+k+5+j)-\Delta_{2^{-\nu}}^2B_1(2^\nu x-\tau-m+k+5-j).$$ From here, similarly to the case $\nu=0$ 
(see also Example \ref{Exam1}), we obtain by H\"{o}lder's inequality,
\begin{align}\label{ppp'}\sum_{\tau\in\mathbb{Z}}\bigl|\langle f,\widetilde{\mathbf{\Psi}}_{\nu-1,\tau}\rangle\bigr|^p&\lesssim2^{\nu/3}
\sum_{\tau\in\mathbb{Z}}\biggl[\sum_{m\ge -\tau}\bigl|\beta_\ast^{11/3}(m-1)\bigr|\sum_{k=0}^{\tau+m}\Bigl|\binom{2/3}{k}\Bigr|
\bigl|\langle I_{0^+}^{1/3}f,\mathbf{\Psi}_{1,-2,\tau+m-4-k}
\rangle\bigr|\biggr]^p\nonumber\\&\lesssim
\sum_{l\ge 0}\Bigr|\bigl|\langle I_{0^+}^{1/3}f,\mathbf{\Psi}_{1,-2,l-4}\rangle\bigr|^p,
\end{align} and the required inequality \eqref{ineqExR} follows now by the decomposition theorems for  unweighted $B_{pq}^s(\mathbb{R})$ \cite[Theorems 2.46 and 2.49]{Tr5} (see also \cite[Proposition 5]{NU} or \cite[Proposition 4.1]{JMAA}). To make an estimate similar to \eqref{ppp'}, analogously to \eqref{ppp}, we need to have $r_w<2\boldsymbol{\alpha}$ (see the comment after \eqref{ppp}). 
 \end{example}
 
\begin{remark}\label{rem1} Observe that in Example \ref{Exam1} the number of steps for obtaining the required estimates can be reduced to those performed for $\nu\in\mathbb{N}$ only, 
simply by adding the case $\nu=0$ at that stage as well. The reason for this is that $\langle f,\Delta^{n+1}_1\mathbf{\Phi}_{n,\boldsymbol{k}}\rangle$ can 
be estimated from above by $\langle f,\mathbf{\Phi}_{n,\boldsymbol{k}-i}\rangle$ with $i=0,\ldots,n+1$.\end{remark}

Basing on the ideas from Examples \ref{Exam1} and \ref{Exam2}, we can state our main results for fractional $\boldsymbol{\alpha}>0$. 
\begin{theorem}\label{ImagesA} Let $1<p<\infty$, $0<q\le\infty$, $s\in\mathbb{R}$, weights $u,w\in\mathscr{A}_\infty$ and $f\in L_1^\textrm{loc}(\mathbb{R})$. 
For fractional $\boldsymbol{\alpha}>0$ and $a\in\mathbb{R}$ let $I_{a^\pm}^{\boldsymbol{\alpha}}$ be defined by \eqref{left} and \eqref{right}. 
Suppose $f\equiv 0$ on $(-\infty,a)$ or $(a,\infty)$, respectively. \\ {\rm (i)} Then
$I_{a^\pm}^{\boldsymbol{\alpha}}f\in {B}_{pq}^{s}(\mathbb{R},w)$ if $f\in {B}_{pq}^{s+\boldsymbol{\alpha}}(\mathbb{R},w)$ provided
$C_{a^\pm}^{\boldsymbol{\alpha}}:=\sup_{d\in\mathbb{N}_0}\mathscr{N}_{a^\pm}^{\boldsymbol{\alpha}}(d)<\infty$, where
\begin{align*}\mathscr{N}_{a^+}^{\boldsymbol{\alpha}}(d):=
&\frac{1}{2^{2d\boldsymbol{\alpha}}}\Biggl[\sup_{\tau\in\mathbb{Z}_a^+}\biggl(\sum_{r\ge\tau}(r-\tau+1)^{p(2\boldsymbol{\alpha}-1)}
\int_{Q_{dr}^{[a]}}w\biggr)^{{1}/{p}}\biggl(\sum_{[a]\le r\le\tau} \Bigl(\int_{Q_{dr}^{[a]}}\tilde{u}\Bigr)^{1-p'}\biggr)^{{1}/{p'}}\\&+
\sup_{\tau\in\mathbb{Z}_a^+}\biggl(\sum_{r\ge\tau}\int_{Q_{dr}^{[a]}}w\biggr)^{{1}/{p}}\biggl(\sum_{[a]\le r\le\tau} (\tau-r+1)^{p'(2\boldsymbol{\alpha}-1)}
\Bigl(\int_{Q_{dr}^{[a]}}\tilde{u}\Bigr)^{1-p'}\biggr)^{{1}/{p'}}\Biggr],\nonumber\\
\mathscr{N}_{a^-}^{\boldsymbol{\alpha}}(d):=
&\frac{1}{2^{2d\boldsymbol{\alpha}}}\Biggl[\sup_{\tau\in\mathbb{Z}_a^-}\biggl(\sum_{[a]\ge r\ge\tau}(r-\tau+1)^{p(2\boldsymbol{\alpha}-1)}
\int_{Q_{dr}^{[a]}}w\biggr)^{{1}/{p}}\biggl(\sum_{r\le\tau} \Bigl(\int_{Q_{dr}^{[a]}}\tilde{u}\Bigr)^{1-p'}\biggr)^{{1}/{p'}}\\&+
\sup_{\tau\in\mathbb{Z}_a^-}\biggl(\sum_{[a]\ge r\ge\tau}\int_{Q_{dr}^{[a]}}w\biggr)^{{1}/{p}}\biggl(\sum_{r\le\tau} (\tau-r+1)^{p'(2\boldsymbol{\alpha}-1)}
\Bigl(\int_{Q_{dr}^{[a]}}\tilde{u}\Bigr)^{1-p'}\biggr)^{{1}/{p'}}\Biggr]
\end{align*} with $\mathbb{Z}^+_a:=\mathbb{Z}\cap\bigl[[a],\infty\bigr)$ and $\mathbb{Z}^-_a:=\mathbb{Z}\cap\bigl(-\infty,[a]\bigr]$, 
${Q_{dr}^{[a]}}:=\Bigl[{\frac{r-[a]}{2^d}},{\frac{r-[a]+1}{2^d}}\Bigr]$,
and $\tilde{u}\le u$ on $\mathbb{R}$. Moreover, 
\begin{equation}\label{more+}\|I_{a^\pm}^{\boldsymbol{\alpha}}f\|_{{B}_{pq}^{s}(\mathbb{R},w)}\lesssim C_{a^\pm}^{\boldsymbol{\alpha}}
\|f\|_{{B}_{pq}^{s+\boldsymbol{\alpha}}(\mathbb{R},u)}.\end{equation}
{\rm (ii)} If $I_{a^\pm}^{\boldsymbol{\alpha}}f\in {A}_{pq}^{s}(\mathbb{R},w)$ then $f\in {B}_{pq}^{s-\boldsymbol{\alpha}}(\mathbb{R},w)$ 
provided $r_w<\boldsymbol{\alpha}$, besides,
\begin{equation}\label{more-}\|f\|_{{B}_{pq}^{s-\boldsymbol{\alpha}}(\mathbb{R},w)}\lesssim\|I_{a^\pm}^{\boldsymbol{\alpha}}f\|_{{B}_{pq}^{s}(\mathbb{R},w)}.
\end{equation} 
For $\boldsymbol{\alpha}\in(0,1)$ the assertion {\rm(ii)} of the theorem is unconditionally true in the case $w\equiv 1$.
\end{theorem}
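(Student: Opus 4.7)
The plan is to extend the two model computations of Examples \ref{Exam1} and \ref{Exam2} from the specific value $\boldsymbol{\alpha}=1/3$ to an arbitrary fractional $\boldsymbol{\alpha}>0$ and from $a=0$ to an arbitrary base point $a\in\mathbb{R}$. The shift in the base point is absorbed by the change of variable $\tilde x=x-a$, which replaces $\mathbb{Z}^\pm$ by $\mathbb{Z}^\pm_a$ and $Q_{dr}$ by $Q_{dr}^{[a]}$ and explains why $[a]$ appears in the supremum and in the cube indices of $\mathscr{N}_{a^\pm}^{\boldsymbol{\alpha}}(d)$. The right-hand side operator $I_{a^-}^{\boldsymbol{\alpha}}$ is reduced to $I_{a^+}^{\boldsymbol{\alpha}}$ by the reflection $y\mapsto -y$, so it suffices to treat the left-hand operator and then symmetrise.

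For part (i), I would apply Theorem \ref{FrDec} with a fractional-order scaling function $\beta_-^{\alpha'}$ and wavelet $\mathit{\Psi}_-^{\alpha'}$ of an order $\alpha'$ (generalising the choice $\alpha'=5/3$ in Example \ref{Exam1}) chosen so that after the Beta-function substitution of \eqref{subst} the pairing $\langle I_{a^+}^{\boldsymbol{\alpha}}f,\widetilde{\mathbf{\Phi}}_{\tau}\rangle$ reduces to a series of integrals of $f$ against positive powers $(-y+\tau-k)_+^{N}$ of a natural degree $N$. The Chu--Vandermonde identity \eqref{ChVan} with $r=-\boldsymbol{\alpha}$ and $s=N+1$ then collapses the resulting double series into a single series whose inner sum is an $(N+1)$-st iterated forward difference of $(-y+\tau-k)_+^{N}$, i.e.\ a translate of the natural-order B-spline $B_N$. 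The same chain, adapted to the wavelet level $\nu\ge 1$ via \eqref{WFdef'} and the rescaling \eqref{09_07_01}, yields pairings of $f$ against the natural-order localised wavelets $\mathbf{\Psi}_{N,\cdot,\cdot}/\mathbf{\Lambda}''_{N}$. Binomial asymptotics \eqref{BinCoef} give the left-over coefficients the decay $(k+1)^{-\boldsymbol{\alpha}-1}$, which, after H\"older, supplies the exponent $p(2\boldsymbol{\alpha}-1)$ in $\mathscr{N}_{a^+}^{\boldsymbol{\alpha}}(d)$. A discrete Hardy-type inequality of \cite[Theorem 1.8]{PSU} applied uniformly in $d\in\mathbb{N}_0$ converts the weighted $\ell^p$-sum of coefficients into the norm governed by $\mathscr{N}_{a^+}^{\boldsymbol{\alpha}}(d)$, and Theorem \ref{main'} in the source space $B_{pq}^{s+\boldsymbol{\alpha}}(\mathbb{R},u)$ closes the estimate \eqref{more+}.

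For part (ii), I would start from the inversion formula \eqref{REPR} (supplemented, for $\boldsymbol{\alpha}\ge 1$, by iteration together with the semigroup property of $I_{a^+}^{\boldsymbol{\alpha}}$), which expresses $f$ as a derivative of a fractional integral of $I_{a^+}^{\boldsymbol{\alpha}}f$. Applying Theorem \ref{FrDec} to $f$ with the fractional scaling function $\beta_-^{1+\boldsymbol{\alpha}}$ (specialising to $\beta_-^{4/3}$ in Example \ref{Exam2}) and running the Beta-function collapse of \eqref{1/3}, the pairings $\langle f,\widetilde{\mathbf{\Phi}}_\tau\rangle$ and $\langle f,\widetilde{\mathbf{\Psi}}_{(\nu-1)\tau}\rangle$ reduce, through Chu--Vandermonde, to weighted sums $\sum_k \binom{\boldsymbol{\alpha}}{k}\langle I_{a^+}^{\boldsymbol{\alpha}}f,\mathbf{\Phi}_{N,\cdot-k}\rangle$ and their wavelet analogues. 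H\"older's inequality and Fubini in the shift variable $k$, exactly as in \eqref{ppp} and \eqref{ppp'}, then give the target bound, provided that after using the Muckenhoupt property \eqref{Muck} to pass from $\int_{Q_{d(\tau-k)}^{[a]}}w$ to $\int_{Q_{d\tau}^{[a]}}w$ the resulting series in $k$ converges. By \eqref{BinCoef} this summability is equivalent to $\sum_{k\ge 0}|\binom{\boldsymbol{\alpha}}{k}|(k+1)^{r_w}<\infty$, i.e.\ to the hypothesis $r_w<\boldsymbol{\alpha}$. A final invocation of Theorem \ref{main'} in the target space $B_{pq}^{s-\boldsymbol{\alpha}}(\mathbb{R},w)$ yields \eqref{more-}; in the unweighted case with $\boldsymbol{\alpha}\in(0,1)$ no Muckenhoupt jump is needed and the series $\sum_k|\binom{\boldsymbol{\alpha}}{k}|$ converges automatically, explaining the unconditional last clause.

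The principal technical obstacle lies in the Chu--Vandermonde bookkeeping at levels $\nu\ge 1$: besides the fractional binomial coefficients $\binom{\alpha'+1}{\cdot}$ there is an extra summation over $m\in\mathbb{Z}$ with weights $\beta_\ast^{2\alpha'+1}(m-1)$ coming from the definition \eqref{WFdef'} of $\psi_\pm^{\alpha'}$, and all of these indices must be reorganised so that a pairing with a single natural-order wavelet $\mathbf{\Psi}_{N,\cdot,\cdot}$ emerges while the decay $(1+|m|)^{-\alpha'-2}$ from \eqref{vanish} is preserved. A second delicate point is the uniformity in $d$ of the Hardy constants: the prefactor $2^{-2d\boldsymbol{\alpha}}$ in $\mathscr{N}_{a^\pm}^{\boldsymbol{\alpha}}(d)$ must balance exactly the combined scaling $2^{\nu/2}\cdot 2^{\nu\boldsymbol{\alpha}}$ coming from the wavelet normalisation in $\widetilde{\mathbf{\Psi}}_{(\nu-1)\tau}$ together with the rescaling of the fractional kernel produced in \eqref{09_07_01}; otherwise the $\ell^q$-summation in the Besov norm would break down at some dyadic scale.
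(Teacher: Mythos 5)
Your proposal follows essentially the same route as the paper's proof: Theorem \ref{FrDec} applied with a fractional spline system on the side whose norm must be bounded from above, the Chu--Vandermonde collapse of the fractional binomial series into natural-order B-spline and wavelet pairings, the discrete Hardy-type inequality of \cite[Theorem 1.8]{PSU} uniformly in the dyadic level, and Theorem \ref{main'} to close, with the condition $r_w<\boldsymbol{\alpha}$ in part (ii) arising exactly as you describe from the summability of the shifted binomial coefficients against the Muckenhoupt jump. The one slip is the intermediate decay rate of the leftover coefficients at wavelet levels: since two binomial series of order $\alpha'+1$ combine there, \eqref{BinCoef} applies with $z=-2\boldsymbol{\alpha}$ and gives $(k+1)^{2\boldsymbol{\alpha}-1}$ rather than $(k+1)^{-\boldsymbol{\alpha}-1}$, which is what actually produces the exponent $p(2\boldsymbol{\alpha}-1)$ you correctly name.
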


\begin{proof} (i) We need to introduce two spline wavelet systems with orders suitable for decomposing norms in the both sides of \eqref{more+}.
To this end we determine $J$ for $B_{pq}^{s}(\mathbb{R},w)$ and define $\delta$, $M>0$ and $N\ge -1$ according to (M.i) -- (M.iii). 
Therefore (see \S\,\ref{FR}), for $B_{pq}^{s}(\mathbb{R},w)$ one can choose $\alpha_0\ge \begin{cases} M-1+[s], & s\ge -1,\\ M-2-s, & s<-1.\end{cases}$
Besides, $\alpha^\ast\in\mathbb{N}$ for $B_{pq}^{s+\boldsymbol{\alpha}}(\mathbb{R},u)$ must satisfy \eqref{condB}.
Therefore, by following the idea from Example \ref{Exam1}, $\alpha_0$ and $\alpha^\ast$ must be taken in such a way to comply the condition $\alpha_0+\boldsymbol{\alpha}=\alpha^\ast$. Besides,  for ability to perform an estimate of the type \eqref{12_07_02} in the proof, one should fix $\alpha_0$ enough big to have $r_w<2(\alpha_0+1)$. 

Consider the operator $I_{a^+}^{\boldsymbol{\alpha}}$. On the strength of Theorem \ref{FrDec},
\begin{equation}\label{rysha+}\|I_{a^+}^{\boldsymbol{\alpha}}f\|_{B^{s}_{pq}(\mathbb{R},w)}\lesssim\|{\lambda}\|_{{b}_{pq}^{s}(w)}= 
\biggl\|\sum_{\nu\in\mathbb{N}_0}2^{q\nu s}\Bigl\|\sum_{\tau\in\mathbb{Z}}
|\lambda_{\nu\tau}|\chi_{Q_{\nu\tau}^{[a]}}\Bigr\|_{L^p(\mathbb{R},w)}^q\biggr\|^{1/q},\end{equation} where (see \eqref{vazhnoo} and \eqref{ForRepr'} with 
$\beta_-^{\alpha_0,\boldsymbol{k}}$ and $\mathit{\Psi}_-^{\alpha_0,\boldsymbol{k},\boldsymbol{s}}$)
$$\lambda_{0\tau}=\langle I_{a^+}^{\boldsymbol{\alpha}}f,\widetilde{\mathbf{\Phi}}_{\tau}\rangle\quad(\tau\in\mathbb{Z}),\qquad   
\lambda_{\nu\tau}=2^{\nu/2}\langle I_{a^+}^{\boldsymbol{\alpha}}f,\widetilde{\mathbf{\Psi}}_{\nu\tau}\rangle\quad(\nu\in\mathbb{N},\,\tau\in\mathbb{Z}).$$ 
One can fix $\boldsymbol{k}\le[a]$ and $\boldsymbol{s}\le[a]$, in order to have $\langle I_{a^+}^{\boldsymbol{\alpha}}f,
\widetilde{\mathbf{\Phi}}_{\tau}\rangle=0$ for $\tau\le 0$.

Further considerations are similar to those in Example \ref{Exam1} (see also Remark \ref{rem1}). Starting from the right hand side of \eqref{rysha+} 
one should estimate it from above by  
$\|\lambda^\ast\|_{b_{pq}^{s+\boldsymbol{\alpha},u}}$ with \begin{equation*}\label{seq0'}\lambda^\ast_{0\tau}=\langle f,\widetilde{\mathbf{\Phi}}^\ast_{\tau}
\rangle\quad(\tau\in\mathbb{Z}),\qquad   \lambda^\ast_{\nu\tau}=2^{\nu}\langle f,\widetilde{\mathbf{\Psi}}^\ast_{(\nu-1)\tau}\rangle
\quad(\nu\in\mathbb{N},\,\tau\in\mathbb{Z}),\end{equation*} where
$$\widetilde{\mathbf{\Phi}}^\ast_{\tau}=\bigl(\mathbf{\Lambda}'_{\alpha^\ast}\bigr)^{-1}{\mathbf{\Phi}}_{\alpha^\ast,
\boldsymbol{k}^\ast=-\boldsymbol{k}-\alpha^\ast-1}(\cdot-\tau),$$ and $$
\widetilde{\mathbf{\Psi}}_{(\nu-1)\tau}^\ast=\bigl(\mathbf{\Lambda}^{''}_{\alpha^\ast}\bigr)^{-1}2^{(\nu-1)}{\mathbf{\Psi}}_{\alpha^\ast,
\boldsymbol{k}^\ast =-\boldsymbol{k}-\alpha^\ast-1,\boldsymbol{s}^\ast=-\boldsymbol{s}-2(\alpha^\ast+1)}(2^{\nu-1}\cdot-\tau),
$$ by performing for $\nu\in\mathbb{N}_0$ all the steps starting from \eqref{Proof1} and finishing by \eqref{Proof2}, with chosen $\alpha_0$ and $\alpha^\ast$. 

From this (i) follows by applying Theorem \ref{main'} with $\alpha^\ast$. For proving the validity of \eqref{more+} with $I_{a^-}^{\boldsymbol{\alpha}}$ one
should apply to Theorem \ref{FrDec} a fractional order spline wavelet system of the type $\{\beta_+^\alpha,\mathit{\Psi}_+^\alpha\}$.

(ii) To prove \eqref{more-} assume $I_{a^\pm}^{\boldsymbol{\alpha}}f\in {B}_{pq}^{s}(\mathbb{R},w)$ and fix some natural  
$n_0$ fitting the condition \eqref{condB} with respect to $w$. Besides, we determine $J$ for $B_{pq}^{s-\boldsymbol{\alpha}}(\mathbb{R},w)$ 
and define $\delta$, $M>0$ and $N\ge -1$ according to (M.i) -- (M.iii). Further, we choose 
$\alpha_\ast\ge \begin{cases} M-1+[s-\boldsymbol{\alpha}], & s\ge \boldsymbol{\alpha}-1,\\ M-2-s-\boldsymbol{\alpha}, & s<\boldsymbol{\alpha}-1.\end{cases}$
Accordingly to the idea from Example \ref{Exam2}, the order $\alpha_\ast$ should satisfy the condition 
$\alpha_\ast-\boldsymbol{\alpha}=\alpha_0\ge n_0$, where $\alpha_0$ is natural.

For proving \eqref{more-} with $I_{a^+}^{\boldsymbol{\alpha}}$, one can start from applying Theorem \ref{FrDec}, which implies the estimate
\begin{equation*}\|f\|_{B^{s-\boldsymbol{\alpha}}_{pq}(\mathbb{R},w)}\lesssim\|{\lambda_\ast}\|_{{b}_{pq}^{s-\boldsymbol{\alpha}}(w)}= 
\biggl\|\sum_{\nu\in\mathbb{N}_0}2^{q\nu(s-\boldsymbol{\alpha})}\Bigl\|\sum_{\tau\in\mathbb{Z}}
|\lambda_{\ast\nu\tau}|\chi_{Q_{\nu\tau}^{[a]}}\Bigr\|_{L^p(\mathbb{R},w)}^q\biggr\|^{1/q},\end{equation*} where elements \eqref{vazhnoo} and \eqref{ForRepr'} 
are defined with $\beta_-^{\alpha_\ast,\boldsymbol{k}_\ast}$ and $\mathit{\Psi}_-^{\alpha_\ast,\boldsymbol{k}_\ast,\boldsymbol{s}_\ast}$ 
(and proper $\boldsymbol{k}_\ast,\boldsymbol{s}_\ast$), that is
$$\lambda_{\ast 0\tau}=\langle f,\widetilde{\mathbf{\Phi}}_{\tau}\rangle\quad(\tau\in\mathbb{Z}),\qquad   
\lambda_{\ast\nu\tau}=2^{\nu/2}\langle f,\widetilde{\mathbf{\Psi}}_{\nu\tau}\rangle\quad(\nu\in\mathbb{N},\,\tau\in\mathbb{Z}).$$ 
Further, we follow the idea from Example \ref{Exam2} to approach (from above) the norm on the right hand side of \eqref{rysha+}. 
This could be achieved analogously to the method described in Example \ref{Exam2} in the case $w\equiv 1$, additionally supplied with an estimate of the type
\eqref{12_07_02} if $w\not\equiv 1$. The rest follows by Theorem \ref{main'}.
\end{proof}

\begin{corollary} It follows from Theorem \ref{ImagesA} that under the condition \eqref{usl} for the both weights $u$ and $w$ it holds 
$C_{a^\pm}^{\boldsymbol{\alpha}}=\mathscr{N}_{a^\pm}^{\boldsymbol{\alpha}}(0)$ {\rm(}see Theorem \ref{T11} for the case $a=0$ in the Introduction{\rm)}.
\end{corollary}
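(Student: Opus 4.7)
The plan is to use \eqref{usl} together with doubling of $\mathscr{A}_\infty$ weights to show that every $\mathscr{N}_{a^\pm}^{\boldsymbol{\alpha}}(d)$ is equivalent, with constants independent of $d$, to the single quantity $\mathscr{N}_{a^\pm}^{\boldsymbol{\alpha}}(0)$. Since the trivial bound $\sup_d \mathscr{N}_{a^\pm}^{\boldsymbol{\alpha}}(d) \ge \mathscr{N}_{a^\pm}^{\boldsymbol{\alpha}}(0)$ is immediate, this yields $C_{a^\pm}^{\boldsymbol{\alpha}} \approx \mathscr{N}_{a^\pm}^{\boldsymbol{\alpha}}(0)$.

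First I apply \eqref{usl} to replace each integral $\int_{Q_{dr}^{[a]}} v$ (for $v\in\{u,w\}$) by the point value $2^{-d}\,v(2^{-d}(r-[a]))$; the half-step shift of $Q_{dr}^{[a]}$ against the interval appearing in \eqref{usl} is absorbed by doubling. In the first summand of $\mathscr{N}_{a^+}^{\boldsymbol{\alpha}}(d)$, the prefactor $(2^{-d})^{1/p}$ from the $w$-sum combines with $(2^{-d})^{(1-p')/p'} = 2^{d/p}$ from the $\tilde u$-sum, cancelling to $1$. After the change of indices $m = r-[a]$, $t = \tau-[a]$ one thus obtains
\begin{equation*}
\mathscr{N}_{a^+}^{\boldsymbol{\alpha}}(d)\approx \frac{1}{2^{2d\boldsymbol{\alpha}}}\sup_{t\ge 0}\Bigl(\sum_{m\ge t}(m-t+1)^{p(2\boldsymbol{\alpha}-1)}w(2^{-d}m)\Bigr)^{1/p}\Bigl(\sum_{0\le m\le t}\tilde u(2^{-d}m)^{1-p'}\Bigr)^{1/p'} + (\cdots),
\end{equation*}
with an analogous reduction of the second summand.

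Next, with $y=2^{-d}m$ and $s=2^{-d}t$ I read the two sums as Riemann sums on the grid $2^{-d}\mathbb{Z}_{\ge 0}$ with step $2^{-d}$, so that $\sum_m f(y_m)\approx 2^d\int f(y)\,dy$. Away from the diagonal, $(m-t+1)^{p(2\boldsymbol{\alpha}-1)}\approx 2^{dp(2\boldsymbol{\alpha}-1)}(y-s)^{p(2\boldsymbol{\alpha}-1)}$. Combining the two sum-to-integral factors of $2^d$ (raised to $1/p$ and $1/p'$) with $2^{dp(2\boldsymbol{\alpha}-1)/p}=2^{d(2\boldsymbol{\alpha}-1)}$ yields a total factor $2^{d(1/p+1/p'+2\boldsymbol{\alpha}-1)}=2^{2d\boldsymbol{\alpha}}$, which exactly cancels the prefactor $1/2^{2d\boldsymbol{\alpha}}$. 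The limit quantity
$$\sup_{s\ge 0}\Bigl(\int_s^\infty (y-s)^{p(2\boldsymbol{\alpha}-1)}w(y)\,dy\Bigr)^{1/p}\Bigl(\int_0^s \tilde u(y)^{1-p'}\,dy\Bigr)^{1/p'}+(\cdots)$$
is independent of $d$. The same rewriting at $d=0$ expresses $\mathscr{N}_{a^+}^{\boldsymbol{\alpha}}(0)$ as this same continuous quantity (the sup over $s\in\mathbb{Z}_{\ge 0}$ being equivalent to the sup over all $s\ge 0$ by \eqref{usl} plus doubling). Hence $\mathscr{N}_{a^+}^{\boldsymbol{\alpha}}(d)\approx\mathscr{N}_{a^+}^{\boldsymbol{\alpha}}(0)$ uniformly in $d$, and the case $\mathscr{N}_{a^-}^{\boldsymbol{\alpha}}$ is symmetric.

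The main technical issue is the near-diagonal block $k=m-t+1=O(1)$, where the discrete factor $k^{p(2\boldsymbol{\alpha}-1)}$ is bounded, whereas the continuous $(y-s)^{p(2\boldsymbol{\alpha}-1)}$ may be singular at $y=s$. The $O(1)$-many such terms produce a uniform boundary contribution, which is controlled by estimating them directly via $w(s)$ and $\tilde u(s)$; \eqref{usl} transports these point values across the $O(2^{-d})$-neighbourhood of $s$ and the contribution absorbs into the constant of $\approx$.
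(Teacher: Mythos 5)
The paper supplies no argument for this corollary beyond the words ``it follows from Theorem \ref{ImagesA}'', so there is no written proof to compare against; your rescaling strategy is certainly the intended mechanism, your off-diagonal bookkeeping of powers of $2^{d}$ (the factor $2^{d/p}\cdot 2^{d/p'}\cdot 2^{d(2\boldsymbol{\alpha}-1)}=2^{2d\boldsymbol{\alpha}}$ cancelling the prefactor) is correct, and the bound $C_{a^\pm}^{\boldsymbol{\alpha}}\ge\mathscr{N}_{a^\pm}^{\boldsymbol{\alpha}}(0)$ is indeed trivial. The problem is the step you defer to the last paragraph: the near-diagonal block is not a harmless boundary term that ``absorbs into the constant of $\approx$''. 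Take $a=0$ and $\tau=2^{d}\sigma$ with $\sigma\in\mathbb{N}$. Keeping only the term $r=\tau$ in the $w$-sum of the first summand of $\mathscr{N}_{0^+}^{\boldsymbol{\alpha}}(d)$ and using \eqref{usl} with doubling gives a first factor $\gtrsim\bigl(2^{-d}w(\sigma)\bigr)^{1/p}$, while convexity of $t\mapsto t^{1-p'}$ (Jensen applied to each block of $2^{d}$ consecutive fine cells whose union is one unit cell) gives
\begin{equation*}
\sum_{0\le r\le\tau}\Bigl(\int_{Q_{dr}^{[0]}}\tilde u\Bigr)^{1-p'}\ \ge\ 2^{dp'}\sum_{0\le\rho<\sigma}\Bigl(\int_{Q_{0\rho}^{[0]}}\tilde u\Bigr)^{1-p'},
\end{equation*}
so that
\begin{equation*}
\mathscr{N}_{0^+}^{\boldsymbol{\alpha}}(d)\ \gtrsim\ 2^{-2d\boldsymbol{\alpha}}\,2^{-d/p}\,2^{d}\;w(\sigma)^{1/p}\Bigl(\sum_{0\le\rho<\sigma}\Bigl(\int_{Q_{0\rho}^{[0]}}\tilde u\Bigr)^{1-p'}\Bigr)^{1/p'}
=2^{d(1/p'-2\boldsymbol{\alpha})}\,w(\sigma)^{1/p}\Bigl(\sum_{0\le\rho<\sigma}\Bigl(\int_{Q_{0\rho}^{[0]}}\tilde u\Bigr)^{1-p'}\Bigr)^{1/p'}.
\end{equation*}
The quantity multiplying $2^{d(1/p'-2\boldsymbol{\alpha})}$ is, by \eqref{usl} at $d=0$ and doubling, exactly the diagonal piece of $\mathscr{N}_{0^+}^{\boldsymbol{\alpha}}(0)$, and it is strictly positive for genuine weights. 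Hence when $2\boldsymbol{\alpha}<1/p'$ the supremum over $d$ blows up relative to $\mathscr{N}_{0^+}^{\boldsymbol{\alpha}}(0)$; the second summand produces the symmetric obstruction $2^{d(1/p-2\boldsymbol{\alpha})}$. Your argument therefore only closes under the additional restriction $2\boldsymbol{\alpha}\ge\max\{1/p,1/p'\}$ (or some substitute hypothesis), which neither you nor the corollary states. This must be confronted explicitly, not waved into the implied constant.

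A second, smaller gap concerns the Riemann-sum identification $\sum_{m}\tilde u(2^{-d}m)^{1-p'}\approx 2^{d}\int\tilde u^{1-p'}$. Condition \eqref{usl} controls averages of $u$ and $w$, not of $\tilde u^{1-p'}$. Jensen gives one direction for free, namely $\bigl(\tfrac{1}{|I|}\int_{I}\tilde u\bigr)^{1-p'}\le\tfrac{1}{|I|}\int_{I}\tilde u^{1-p'}$, but the reverse inequality --- which you need in order to pass from your continuous limit quantity back to the unit-grid sums defining $\mathscr{N}_{a^\pm}^{\boldsymbol{\alpha}}(0)$ --- is a reverse Jensen estimate, i.e.\ an $\mathscr{A}_{p}$-type condition on $\tilde u$ over dyadic intervals, and it does not follow from $u\in\mathscr{A}_\infty$, $\tilde u\le u$ and \eqref{usl}. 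The remaining ingredients of your proposal (replacing $\int_{Q_{dr}^{[a]}}v$ by $2^{-d}v(2^{-d}(r-[a]))$ via \eqref{usl} and doubling, and comparing suprema over the fine and coarse grids) are sound, but as written the proof has a genuine hole at the diagonal and an unjustified equivalence for the dual weight.
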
 

\begin{remark} The case $0<p\le 1$ can be also involved into consideration in Theorem \ref{ImagesA} with properly modified conditions $C_{a^\pm}^{\boldsymbol{\alpha}}<\infty$ (see \cite[\S\,1.4]{PSU} and \cite[Chapter 11, Section 1.5, Theorem 4]{KA}).
\end{remark}

\end{document}